\numberwithin{equation}{section}
\theoremstyle{plain}
\newtheorem{thm}{Theorem}[section]
\newtheorem{lem}[thm]{Lemma}
\newtheorem{cor}[thm]{Corollary}
\theoremstyle{definition}
\newtheorem{df}{Definition}[section]
\newtheorem{rem}{Remark}[section]
\newtheorem{ex}{Example}[section]
\newcommand{\FF}{\mathbb{F}}
\newcommand{\ZZ}{\mathbb{Z}}
\newcommand{\CC}{\mathbb{C}}
\newcommand{\D}{\mathcal{D}}
\newcommand{\CJ}{\mathrm{CJ}}
\newcommand{\SCJ}{\mathrm{SCJ}}
\newcommand{\scwe}{\mathbf{scwe}}
\newcommand{\cwe}{\mathbf{cwe}}
\newcommand{\III}{\mathrm{III}}
\newcommand{\IV}{\mathrm{IV}}
\def\bm#1{\mathbf{#1}}
\DeclareMathOperator{\wt}{wt}
\DeclareMathOperator{\comp}{comp}
\begin{document}


\title{Jacobi polynomials and design theory II}

\author[Chakraborty]{Himadri Shekhar Chakraborty}
\address{Department of Mathematics, Shahjalal University of Science and Technology, Sylhet 3114, BANGLADESH}
\email{himadri-mat@sust.edu}

\author[Ishikawa]{Reina Ishikawa}
\address{Graduate School of Science and Engineering, Waseda University, Tokyo 169-8555, Japan}
\email{reina.i@suou.waseda.jp}
 

\author[Tanaka]{Yuuho Tanaka$\ast$}
\address{Graduate School of Science and Engineering, Waseda University, Tokyo, 169-8555, Japan}
\email{tanaka\_yuuho\_dc@akane.waseda.jp}

\thanks {*Corresponding author}

\date{}
\maketitle

\begin{abstract}
	In this paper, 
	we introduce some new polynomials associated to linear codes
	over~$\FF_{q}$. In particular, we introduce the 
	notion of split complete Jacobi polynomials 
	attached to multiple sets of coordinate places of a 
	linear code over~$\FF_{q}$, 
	and give the MacWilliams type identity for it.
	We also give the notion of generalized $q$-colored $t$-designs. 
	As an application of the generalized $q$-colored $t$-designs, 
	we derive a formula that obtains 
	the split complete Jacobi polynomials of a linear code over~$\FF_{q}$.
	Moreover, we define the concept of colored packing (resp. covering) designs.
	Finally, we give some coding theoretical applications 
	of the colored designs for Type~III and Type~IV codes.	
\end{abstract}

{\small
	\noindent
	{\bfseries Key Words:}
	Codes, Jacobi polynomials, designs, invariant theory.\\ \vspace{-0.15in}
	
	\noindent
	2010 {\it Mathematics Subject Classification}. 
	Primary 11T71;
	Secondary 94B05, 11F11.\\ \quad
}

\section{Introduction}

In 1997, Ozeki~\cite{Ozeki} introduced the notion of Jacobi polynomials of 
linear codes, an analogue to Jacobi forms~\cite{EZ} of lattices.
Many authors studied the Jacobi polynomials in coding theory;
for instance~\cite{BMS1999,BRS2000,BSU2001,CM2021,CMO2022, HOO}. 
Among these articles Bonnecaze et al.~\cite{BMS1999,BRS2000,BSU2001} pointed out some 
characterizations of the Jacobi polynomials with the codes supporting designs. 
Moreover, Bonnecaze, Rains and Solé~\cite{BRS2000} introduced the notion of 
colored $t$-designs 
and gave an application of these designs for $\ZZ_{4}$-codes in the 
evaluation of the Jacobi polynomials from the symmetrized weight enumerator 
of $\ZZ_{4}$-codes using the polarization operator. 
Later, Bonnecaze, Solé and Udaya~\cite{BSU2001} studied the $3$-colored $3$-designs
in the case of Type~$\III$ codes. 
Furthermore, Cameron~\cite{Cameron2009} gave a new generalization 
of the combinatorial $t$-designs. In this paper, we would like to call these 
designs as the generalized $t$-designs. In a recent study, 
Chakraborty, Miezaki, Oura and Tanaka~\cite{CMOTxxxx} 
introduced the notion of Jacobi polynomials of a linear code with 
multiple reference vectors and gave a design theoretical application of these Jacobi polynomials in the study of generalized $t$-designs.

In this paper, 
we introduced the notion of split complete weight enumerators 
for the linear codes over $\FF_{q}$ which is independent from 
the sense of Bonnecaze et al.~\cite{BSU2001}. 
We also introduce the notion of 
split complete Jacobi polynomials of a linear code over $\FF_{q}$ 
attached to multiple sets of coordinate places of the code.
We show that both the code polynomials: split complete weight enumerators 
and split complete Jacobi polynomials satisfy the MacWilliams type identities.
In particular, 
the complete Jacobi polynomials of linear codes over $\FF_{3}$ 
in our sense are equivalent to
the split complete weight enumerators of codes over~$\FF_{3}$ 
in the sense of Bonnecaze et al.~\cite{BSU2001}.
Moreover, we define the concept of the generalized colored $t$-designs,
and as an analogue to Bonnecaze et al.~\cite{BSU2001},
we present a combinatorial interpretation of the polarization of the 
split complete Jacobi polynomials of a linear code over~$\FF_{q}$.
In addition, we study the complete Jacobi polynomials of some
Type~$\III$ (resp. Type~$\IV$) codes of specific lengths 
through invariant theory to construct the colored packing 
(resp. covering) designs that correspond to
the coefficients in the complete Jacobi polynomials. 
Bonnecaze et al.~\cite{BSU2001} investigated the $3$-colored $t$-designs 
structure for the extremal Type~$\III$ codes. 
In this paper, we study the Type~$\IV$ codes of some 
specific lengths and obtain the $4$-colored $t$-design structures.

This paper is organized as follows. 
In Section~\ref{Sec:Preli}, 
we discuss the basic definitions and notations  
that we use in this paper.
We also prove the MacWilliams type identity (Theorem~\ref{Thm:SplitCompMacWilliams})
for the spilt complete weight enumerators of codes over~$\FF_{q}$.
In Section~\ref{Sec:GenColrDesign}, 
we introduced several colored designs, namely 
generalized colored $t$-designs, colored packing (resp. covering) 
designs and some of their properties. 
In Section~\ref{Sec:DesignJac}, 
we give the MacWilliams type identity (Theorem~\ref{Thm:JacobiMacWilliams})
for the spilt complete Jacobi polynomials of linear codes over~$\FF_{q}$.
We also observe (Theorem~\ref{Thm:Main1}, Theorem~\ref{Thm:Main2}) how polarization operator acts to obtain the split complete Jacobi polynomials 
attached to multiple sets of coordinate places of a code.
In Section~\ref{Sec:Examples},
we disclose some facts between a 
Type~$\III$ (resp. Type~$\IV$) code of specific lengths 
and colored designs with the help of the complete Jacobi polynomials.
We also show that the codewords of
fixed composition in the Hermitian Type~$\IV$ codes of length~$6$ 
hold $4$-colored $2$-designs (Theorem~\ref{Thm:Main3}).
Finally, we conclude the paper with some remarks in Section~\ref{Sec:Conclusion}.

All computer calculations in this paper were done with the help of
Magma~\cite{Magma}.

\section{Preliminaries}\label{Sec:Preli}

Let $\FF_{q}$ be a finite field of order~$q$, 
where $q$ is a prime power. 
Then $\FF_{q}^{n}$ denotes the vector space of dimension $n$ over $\FF_{q}$.
The elements of $\FF_{q}^{n}$ are known as \emph{vectors}.
The \emph{Hamming weight} of a vector 
$\bm{u} = (u_1,\dots, u_n)\in \FF_{q}^{n}$ 
is denoted by~$\wt(\bm{u})$ and
defined to be the number of $i$'s such that $u_{i} \neq 0$. 
The \emph{inner product} of two vectors 
$\bm{u},\bm{v} \in \FF_{q}^{n}$ 
is given by
\[
	\bm{u} \cdot \bm{v} 
	:= 
	u_{1} v_{1} + \dots + u_{n}v_{n},
\]
where $\bm{u} = (u_{1},\ldots,u_{n})$ and $\bm{v} = (v_{1},\ldots,v_{n})$.
If $q$ is an even power of an arbitrary prime~$p$,
then it is convenient to consider 
another inner product known as
the \emph{Hermitian inner product} 
which can be defined as
\[
	\bm{u} \cdot \overline{\bm{v}}
	:=
	u_{1}\overline{v_{1}} + \cdots + u_{n}\overline{v_{n}},
\]
where $\overline{v_{i}}:= {v_{i}}^{\sqrt{q}}$.
An $\FF_q$-linear code of length~$n$ is a vector subspace of $\FF_{q}^{n}$. 
The elements of an~$\FF_{q}$-linear code are called \emph{codewords}. 
The \emph{dual code} of an $\FF_{q}$-linear code~$C$ 
of length~$n$ is defined by
\[
	C^\perp 
	:= 
	\{
		\bm{v}\in \FF_{q}^{n} 
		\mid 
		\bm{u} \cdot \bm{v} = 0 
		\text{ for all } 
		\bm{u}\in C
	\}. 
\]
An $\FF_{q}$-linear code $C$ is called \emph{self-dual} if $C = C^\perp$.
Let $q$ be an even power of an arbitrary prime number.
Then an $\FF_{q}$-linear code~$C$ of length~$n$ 
is called \emph{Hermitian self-dual} if $C = C^{\perp_{H}}$,
where $C^{\perp_{H}}$ denotes the \emph{Hermitian dual code} of $C$ 
which is defined as
\[
	C^{\perp_{H}} 
	:= 
	\{
		\bm{v}\in \FF_{q}^{n} 
		\mid 
		\bm{u} \cdot \overline{\bm{v}} = 0 
		\text{ for all } 
		\bm{u}\in C
	\}.
\]
Most of the results in this paper are stated for $\FF_{q}$-linear codes 
with usual inner product but it can be re-phrased with equal validity
to the case of the codes with the Hermitian inner product.

Let $X \subseteq [n]$. 
The \emph{composition} of an element $\bm{u}\in \FF_{q}^{n}$ 
attached to~$X$ is the $q$-tuple:
\[
	\comp_{X}(\bm{u}) 
	:= 
	(n_{a,X}(\bm{u}) : a \in \FF_{q}),
\]
where $ n_{a,X}(\bm{u}) := \#\{i \in X \mid u_{i} = a\}$.
Obviously,
$\sum_{a \in \FF_{q}} n_{a,X}(\bm{u}) = |X|$.
If $X = [n]$, we prefer to write the composition of $\bm{u}\in \FF_{q}^{n}$ 
as
\[
	\comp(\bm{u}) 	
	:= 
	(n_{a}(\bm{u}) : a \in \FF_{q}),
\]
where 
$n_{a}(\bm{u})$
denotes 
the number of coordinates of $\bm{u}$ that are equal 
to~$a \in \FF_{q}$.

It is well known that 
the length~$n$ of a self-dual code over~$\FF_q$ is even and the dimension is $n/2$.
To study self-dual codes in detail, we refer the readers 
to~\cite{BMS1972, Gleason, MMS1972, NRS}. 
A self-dual code~$C$ over~$\FF_3$ of length $n\equiv 0\pmod 4$ is called \emph{Type}~$\III$ if the weight of each codeword of~$C$ is multiple of~$3$.
Again a self-dual code~$C$ over~$\FF_4$ of length $n\equiv 0 \pmod 2$ having even weight is called \emph{Type}~$\IV$. 

\begin{df}
	Let $C$ be and $\FF_q$-linear code of length $n$.
	Then the \emph{weight enumerator} of $C$ is defined as
	\[
		W_C(x,y)
		:=
		\sum_{\bm{u}\in	C}
		x^{n-\wt(\bm{u})}y^{\wt(\bm{u})}.
	\]
\end{df}

\begin{df}\label{Def:CompWeightEnum}
	Let $C$ be an $\FF_{q}$-linear code of length~$n$.
	Then the \emph{complete weight enumerator} of $C$ 
	is defined as
	\[
		\cwe_{C}(\{x_{a}\}_{a\in\FF_{q}})
		:=
		\sum_{\bm{u} \in C}
		\prod_{a \in \FF_{q}}
		x_{a}^{n_{a}(\bm{u})}.
	\]
\end{df}

\begin{rem}
	$W_{C}(x,y) = \cwe_{C}(x_{0} \leftarrow x, \{x_{a} \leftarrow y\}_{0 \neq a\in\FF_{q}})$.
\end{rem}

\begin{df}\label{Def:SplitCompWeight}
	Let $C$ be an $\FF_{q}$-linear code of length~$n$. Then 
	the \emph{split complete weight enumerator} attached to $\ell$ mutually
	disjoint subset $X_{1},\ldots,X_{\ell}$ of 
	coordinate places of the code $C$ such that 
	$$X_{1} \sqcup \cdots \sqcup X_{\ell} = [n]$$ is defined as follows:
	\[
		\scwe_{C,X_{1},\ldots,X_{\ell}}
		(\{\{x_{X_{i},a}\}_{a\in\FF_{q}}\}_{1\leq i \leq \ell})
		:=
		\sum_{\bm{u} \in C}
		\prod_{i = 1}^{\ell}
		\prod_{a \in \FF_{q}}
		x_{X_{i},a}^{n_{a,X_{i}}(\bm{u})}.
	\]
\end{df}

Note that when $\ell = 1$, the split complete weight enumerators of an $\FF_{q}$-linear code~$C$ coincide with its complete weight enumerators. 

\begin{ex}\label{Ex:SplitCompWeightEnum}
	Let $C_{4}$ be an $\FF_{3}$-linear code of length~$4$ 
	with the generator matrix:
	\[
		\begin{bmatrix}
			1 & 0 & 1 & 1 \\
			0 & 1 & 1 & 2
		\end{bmatrix}.
	\]
	The elements of $C_{4}$ are listed as follows:
	\[
			\begin{tabular}{c c c}
				$(0,0,0,0)$, & $(0,1,1,2)$, & $(0,2,2,1)$,\\
				$(1,0,1,1)$, & $(1,1,2,0)$, & $(1,2,0,2)$,\\
				$(2,0,2,2)$, & $(2,1,0,1)$, & $(2,2,1,0)$.
			\end{tabular}
	\]
	Therefore the complete weight enumerator of $C_{4}$ is
	\[
		\cwe_{C_{4}}(x_{0},x_{1},x_{2})
		=
		x_{0}^{4}
		+
		x_{0}^{1} x_{1}^{3}
		+
		x_{0}^{1} x_{2}^{3}
		+
		3
		x_{0}^{1} x_{1}^{2} x_{2}^{1}
		+
		3
		x_{0}^{1} x_{1}^{1} x_{2}^{2}.
	\]
	Let $X_{1} = \{1,2\}$ and $X_{2} = \{3,4\}$ be two disjoint sets
	such that $X_{1} \sqcup X_{2} = [4]$. Then the split complete weight
	enumerator of $C_{4}$ attached to $X_{1}$ and $X_{2}$ is
	\begin{multline*}
		\scwe_{C_{4},X_{1},X_{2}}
		(x_{X_{1},0},x_{X_{1},1},x_{X_{1},2},x_{X_{2},0},x_{X_{2},1},x_{X_{2},2})\\
		=
		x_{X_{1},0}^{2} 
		x_{X_{2},0}^{2}
		+
		x_{X_{1},0}^{1} x_{X_{1},1}^{1} 
		x_{X_{2},1}^{2}	
		+
		x_{X_{1},0}^{1} x_{X_{1},2}^{1}
		x_{X_{2},2}^{2}	\\
		+
		x_{X_{1},0}^{1}x_{X_{1},1}^{1}
		x_{X_{2},1}^{1}x_{X_{2},2}^{1}
		+
		x_{X_{1},1}^{2}
		x_{X_{2},0}^{1} x_{X_{2},2}^{1}
		+
		x_{X_{1},1}^{1} x_{X_{1},2}^{1}
		x_{X_{2},0}^{1} x_{X_{2},1}^{1}\\
		+
		x_{X_{1},0}^{1} x_{X_{1},2}^{1}
		x_{X_{2},1}^{1} x_{X_{2},2}^{1}
		+
		x_{X_{1},1}^{1} x_{X_{1},2}^{1}
		x_{X_{2},0}^{1} x_{X_{2},2}^{1}
		+
		x_{X_{1},2}^{2}
		x_{X_{2},0}^{1} x_{X_{2},1}^{1}.
	\end{multline*}
\end{ex}
 
A \emph{character} of $\FF_{q}$, where $q =  p^{f}$ for some prime number $p$, is a homomorphism from the additive group~$\FF_{q}$ 
to the multiplicative group of non-zero complex numbers. 
We review~\cite{CM2021,CMOTxxxx,MS1977} to introduce some fixed non-trivial characters over~$\FF_{q}$. 
Now let $F(x)$ be a primitive irreducible polynomial 
of degree $f$ over $\FF_{p}$ and let $\lambda$ be a root of $F(x)$.
Then any element $a \in \FF_{q}$ has a unique representation as:
\begin{equation*}\label{EquAlpha}
	a 
	=
	a_{0} + a_{1} \lambda	
	+ a_{2} \lambda^{2}
	+ \cdots +
	a_{f-1} \lambda^{f-1},
\end{equation*} 
where $a_{i} \in \FF_{p}$.
For $b \in \FF_{q}$, 
we define $\chi_{b}(a) := \zeta_{p}^{a_{0}b_{0}+\cdots+a_{f-1}b_{f-1}}$, 
where $\zeta_{p}$ is the $p$-th primitive root $e^{2{\pi}i/p}$ of unity.
When $b \neq 0$, then $\chi_{b}$ is a non-trivial character of $\FF_{q}$.
Let $\chi$ be a non-trivial character of $\FF_{q}$. 
Then for any $a \in \FF_{q}$, 
we have the following property:
\[
	\sum_{b \in \FF_{q}}
	\chi(ab) 
	:= 
	\begin{cases}
		q & \mbox{if} \quad a = 0, \\
		0 & \mbox{if} \quad a \neq 0.
	\end{cases} 
\]

\begin{lem}[\cite{MS1977}]\label{Lem:DualIden}
	Let $C$ be an $\FF_{q}$-linear code of length~$n$. 
	For $\bm{v} \in \FF_{q}^{n}$, define
	\[
		\delta_{C^{\perp}}(\bm{v}) 
		:= 
		\begin{cases}
			1 & \mbox{if } \bm{v} \in C^{\perp}, \\
			0 & \mbox{otherwise}.
		\end{cases} 
	\]
	Then we have the following identity:
	\[
		\delta_{C^\perp}(\bm{v})
		= 
		\dfrac{1}{|C|} 
		\sum_{\bm{u} \in C} 
		\chi(\bm{u} \cdot \bm{v}).
	\]
\end{lem}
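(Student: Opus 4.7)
The plan is to split the argument into two cases according to whether $\bm{v}$ lies in~$C^{\perp}$, and in each case to evaluate the character sum on the right-hand side directly.

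First, if $\bm{v} \in C^{\perp}$, then by definition $\bm{u}\cdot\bm{v}=0$ for every $\bm{u}\in C$, so each summand $\chi(\bm{u}\cdot\bm{v})$ equals $\chi(0)=1$. The sum therefore collapses to $|C|$, and dividing by $|C|$ yields $1=\delta_{C^{\perp}}(\bm{v})$, as desired.

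Second, if $\bm{v}\notin C^{\perp}$, I would view the assignment $\bm{u}\mapsto \bm{u}\cdot\bm{v}$ as an $\FF_{q}$-linear map $\varphi\colon C\to\FF_{q}$. Since $\bm{v}\notin C^{\perp}$, the map $\varphi$ is not identically zero; as its image is an $\FF_{q}$-subspace of the one-dimensional space $\FF_{q}$, it must be surjective. Consequently each fibre $\varphi^{-1}(a)$ has the same size $|C|/q$ by rank-nullity. Grouping the character sum according to the value of $\bm{u}\cdot\bm{v}$ then gives
\[
\sum_{\bm{u}\in C}\chi(\bm{u}\cdot\bm{v}) \;=\; \frac{|C|}{q}\sum_{a\in\FF_{q}}\chi(a) \;=\; 0,
\]
where the final equality follows from the character sum identity displayed just before the lemma (take the fixed element in that identity to be $1\in\FF_{q}^{\times}$). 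Dividing by $|C|$ recovers $0=\delta_{C^{\perp}}(\bm{v})$, completing the second case.

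The argument is essentially the standard character orthogonality calculation, and there is no serious obstacle: the only point that needs mild care is confirming that $\varphi$ is surjective — which is automatic, because any nonzero $\FF_{q}$-linear map to a one-dimensional $\FF_{q}$-space is so — so that the fibres really have uniform cardinality $|C|/q$, allowing the character sum over $C$ to be reduced to the character sum over $\FF_{q}$ supplied by the hypothesis.
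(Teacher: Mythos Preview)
Your proof is correct and is exactly the standard orthogonality argument. Note, however, that the paper does not supply its own proof of this lemma: it is stated with a citation to~\cite{MS1977} and then used as a black box in the proof of Theorem~\ref{Thm:JacobiMacWilliams}. So there is no paper proof to compare against; your argument would serve perfectly well as the omitted justification.
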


Now we have the following MacWilliams type identity for the split complete weight enumerators.
The proof of the theorem is straightforward. So we leave it for the readers.

\begin{thm}\label{Thm:SplitCompMacWilliams}
	Let $C$ be an $\FF_{q}$-linear code of length $n$.
	Again let $\chi$ be a non-trivial character of $\FF_{q}$.
	Then
	\begin{align*}
		\scwe
		&_{C^{\perp},X_{1},\ldots,X_{\ell}} 
		(\{\{x_{X_{i},a}\}_{a \in \FF_{q}}\}_{1\leq i\leq \ell})\\
		& = 
		\dfrac{1}{|C|} 
		\scwe_{C,X_{1},\ldots,X_{\ell}}
		\left(
		\left\{
		\left\{
		\sum_{b \in \FF_{q}}
		\chi(a b) 
		x_{X_{i},b}
		\right\}_{a \in \FF_{q}}
		\right\}_{1\leq i \leq \ell}
		\right).
	\end{align*}
\end{thm}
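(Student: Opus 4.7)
The plan is to mimic the classical MacWilliams derivation, adapted to the split complete setting. I would begin by extending the defining sum of $\scwe_{C^{\perp}, X_{1}, \ldots, X_{\ell}}$ from $C^{\perp}$ to all of $\FF_{q}^{n}$ by inserting the indicator $\delta_{C^{\perp}}(\bm{v})$, and then invoke Lemma~\ref{Lem:DualIden} to rewrite $\delta_{C^{\perp}}(\bm{v}) = \frac{1}{|C|} \sum_{\bm{u} \in C} \chi(\bm{u} \cdot \bm{v})$. After swapping the order of summation, the entire problem reduces to computing, for each fixed $\bm{u} \in C$, the inner sum
\[
S(\bm{u}) := \sum_{\bm{v} \in \FF_{q}^{n}} \chi(\bm{u} \cdot \bm{v}) \prod_{i=1}^{\ell} \prod_{a \in \FF_{q}} x_{X_{i},a}^{n_{a,X_{i}}(\bm{v})}.
\]

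The key step is to factor $S(\bm{u})$ coordinate by coordinate. Since $X_{1} \sqcup \cdots \sqcup X_{\ell} = [n]$, every index $j \in [n]$ belongs to a unique part $X_{i(j)}$, so the monomial in $\bm{v}$ rewrites as $\prod_{j=1}^{n} x_{X_{i(j)}, v_{j}}$. Combining this with $\chi(\bm{u} \cdot \bm{v}) = \prod_{j=1}^{n} \chi(u_{j} v_{j})$ and letting each $v_{j}$ run independently, I obtain
\[
S(\bm{u}) = \prod_{j=1}^{n} \sum_{b \in \FF_{q}} \chi(u_{j} b)\, x_{X_{i(j)}, b}.
\]

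Finally, I would regroup the product over $j$ by first collecting the factors whose index lies in a common $X_{i}$, and then by the value $a = u_{j}$. Since there are exactly $n_{a,X_{i}}(\bm{u})$ indices $j \in X_{i}$ with $u_{j} = a$, the product becomes
\[
S(\bm{u}) = \prod_{i=1}^{\ell} \prod_{a \in \FF_{q}} \left( \sum_{b \in \FF_{q}} \chi(ab)\, x_{X_{i}, b} \right)^{n_{a,X_{i}}(\bm{u})},
\]
which is precisely the monomial appearing inside $\scwe_{C, X_{1}, \ldots, X_{\ell}}$ evaluated at the substituted variables. Summing $\frac{1}{|C|} S(\bm{u})$ over $\bm{u} \in C$ then yields the claimed right-hand side.

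There is no serious obstacle; the argument is pure bookkeeping. The one structural point that must be used explicitly is that $\{X_{1}, \ldots, X_{\ell}\}$ forms a partition of $[n]$: this is what lets the monomial $\prod_{i}\prod_{a} x_{X_{i},a}^{n_{a,X_{i}}(\bm{v})}$ split cleanly into a product over single coordinates, with no index appearing in two factors and no index omitted. Once this factorization is in place, the Fubini-style swap and the standard character-sum identity carry the proof through.
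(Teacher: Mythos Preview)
Your proposal is correct and follows exactly the standard MacWilliams-type argument that the paper intends; indeed, the paper omits the proof of this theorem as ``straightforward'' and then carries out precisely your computation (insert $\delta_{C^\perp}$ via Lemma~\ref{Lem:DualIden}, factor coordinate-by-coordinate using the partition $X_1\sqcup\cdots\sqcup X_\ell=[n]$, regroup by the value $u_j=a$) in the proof of the closely related Theorem~\ref{Thm:JacobiMacWilliams}.
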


\section{Generalized colored designs}\label{Sec:GenColrDesign}

Let $v$ be a positive integer, and let
$\bm{v} := (v_{1},\ldots,v_{\ell})$ such that $v = \sum_{i = 1}^{\ell} v_{i}$.
Let $\bm{X} := (X_{1},\ldots,X_{\ell})$, where $X_{i}$'s are pairwise disjoint sets
such that  $|X_{i}| = v_{i}$ for all $i$.
Again let 
\[
	\mathcal{B} 
	\subseteq  
	B_{1} \times \cdots \times B_{\ell},
\]
where $B_{i}$ is the set of blocks corresponding to~$X_{i}$ for all $i$.

Then the \emph{generalized colored incidence structure} is a triple 
$\D := (\bm{X},\mathcal{B},\mathcal{C})$, where $\mathcal{C}$
is a set of \emph{colors}, together with a function
\[
	\rho_{i} : X_{i} \times B_{i} \to \mathcal{C}
\]
for all~$i$. 
We will say that $\mathcal{B}$ has color $\rho_{i}(p,b)$ at $p$ in the $i$-th 
component.
For an element $\bm{K}:=(K_{1},\ldots,K_{\ell}) \in \mathcal{B}$, 
we define a function $n_{i} : \mathcal{C} \to \ZZ$ called the \emph{palette} on $K_{i}$
for $1\leq i \leq \ell$ that counts the number of occurrence of color $c \in \mathcal{C}$ in $K_{i}$.
The generalized colored incident structure is said to be \emph{uniform} if 
each color $c\in \mathcal{C}$ occurs $\sum_{i=1}^{\ell} n_{i}(c)$ times in every element $\bm{K} \in \mathcal{B}$.

\begin{df}
	The uniform colored incidence structure~$\D$ is called
	the \emph{generalized} \emph{colored} $t$-\emph{design} 
	if $\bm{t} := (t_{1},\ldots,t_{\ell})$ such that $t = \sum_{i = 1}^{n} t_{i}$, 
	then for each $\bm{C}:=(C_{1},\ldots,C_{\ell})$ such that $C_{i}$ is the $t_{i}$-multiset of colors (repeated choice allowed) for all~$i$, 
	there is a number $\lambda \geq 0$ such that
	for any choice $\bm{T} := (T_{1},\ldots,T_{\ell})$ with $T_{i} \in \binom{X_{i}}{t_{i}}$ for all $i$, 
	there are precisely $\lambda$ members $\bm{K} := (K_{1},\ldots,K_{n}) \in \mathcal{B}$ for which $T_{i} \subseteq K_{i}$ for all $i$ that use the $t_{i}$-multiset of colors~$C_{i}$ for the points in~$T_{i}$.
\end{df}

Let $\D = (\bm{X},\mathcal{B},\mathcal{C})$ 
be a generalized colored $t$-design with the set of colors 
$\mathcal{C} = \{1,2,\ldots,r\}$.
It is immediate from the above definition that the~$\lambda$'s
are not independent from the choices of colors.
Therefore by
$\lambda_{(j_{1}(1),\ldots,j_{1}(r)),\ldots,(j_{\ell}(1),\ldots,j_{\ell}(r))}$,
we denote the number of $\bm{K} = (K_{1},\ldots,K_{\ell}) \in \mathcal{B}$
that uses the color~$c\in \mathcal{C}$ $j_{i}(c)$ times in $K_{i}$
with $\sum_{c=1}^{r} j_{i}(c) = t_{i}$ for all $1\leq i\leq \ell$.
Obviously, the parameters of a generalized colored $t$-design 
$\D = (\bm{X},\mathcal{B},\mathcal{C})$ depend only on the number of points $v_{i}$ in $X_{i}$ for all $i$, the number of blocks $|\mathcal{B}|$ and palette
$((n_{1}(1),n_{1}(2),\ldots,),\ldots,(n_{\ell}(1),n_{\ell}(2),\ldots))$, 
therefore it is convenient to write
a generalized colored $t$-design as a
\[
	t-(\bm{v},((n_{1}(1),n_{1}(2),\ldots,),\ldots,(n_{\ell}(1),n_{\ell}(2),\ldots)),|\mathcal{B}|)
\]
design. 
Note that 
when $\bm{k} = (k)$ and $\bm{v} = (v)$, 
then the generalized colored $t$-design 
coincide with the colored~$t$-\emph{design}.
For detail discussions on 
colored $t$-designs, 
we refer the readers 
to~\cite{BRS2000, BSU2001}.

To construct the generalized colored $t$-designs 
from a linear code~$C$ of length~$n$ over $\FF_{q}$.
Let $\bm{v} = (v_{1},\ldots,v_{\ell})$ such that 
$\sum_{i = 1}^{\ell} v_{i} = n$ and
$\bm{X} = (X_{1},\ldots,X_{\ell})$ 
of pairwise disjoint sets 
$X_{i} \subseteq [n]$ with $|X_{i}| = v_{i}$.
We define the \emph{split composition} $\bm{s}:=(s_{1},\ldots,s_{\ell})$ 
such that
$s_{i} =  (s_{i1},\ldots,s_{iq})$ for all $i$ satisfying
$\sum_{j = 1}^{q} s_{ij} = v_{i}$.
For any codeword $\bm{u} \in C$, 
let $\bm{K}(\bm{u}) := (K_{X_{1}}(\bm{u}),\ldots,K_{X_{\ell}}(\bm{u}))$
such that $K_{X_{i}}(\bm{u})$ for all~$i$ are the characteristic vectors
of the supports of $\bm{u}$ with the coordinate place  $X_{i}$.
Let $C_{\bm{s}}$ be the set of codewords of $\bm{u} \in C$ such that
$\comp_{X_{i}}(\bm{u}) = s_{i}$ for all $i$.
We denote
\begin{align*}
	\mathcal{B}(C_{\bm{s}}) 
	& := 
	\{\bm{K}(\bm{u}) \mid \bm{u} \in C_{\bm{s}}\}.
\end{align*}
In general, $\mathcal{B}(C_{\bm{s}})$ is a multi-set. 
Let $\mathcal{C}$ be a set of colors. Then
we call~$C_{\bm{s}}$ is a generalized colored $t$-design if
the triple $(\bm{X},\mathcal{B}(C_{\bm{s}}),\mathcal{C})$ together with
the function $\rho_{i}(p,b)$ of $u_{p}$ is a generalized colored $t$-design.
We say the code~$C$ is \emph{generalized colorwise} $t$-\emph{homogeneous} if the set of codewords $C_{\bm{s}}$ for every given $\bm{s}$ holds a generalized colored $t$-design. The code is called \emph{generalized colorwise homogeneous}
when $t=1$.
In particular, 
for any $\FF_{q}$-linear code one can choose the function $\rho_{i}(p,b) = a$
if $u_{p} = a$ where $a \in \FF_{q}$.

A \emph{colored design} with parameters
$t$-$(v,(n(1),n(2),\ldots),(\lambda_{1}^{a_{1}}(P),\ldots,\lambda_{N}^{a_{N}}(P)))$
is a set of blocks~$\mathcal{B}$ with palette $(n(1),n(2),\ldots)$
of a set of $v$ points, called the varieties 
and a partition of the set of all $t$-tuples into~$N$ groups 
$G_{1},\ldots,G_{N}$ satisfying that
for each $t$-multiset of colors~$P$ (repeated choices allowed),
there is a number~$\lambda_{i}$ such that for every $t$-set belonging to 
$G_{i}$ (say $a_{i}$ such $t$-set),
there are exactly $\lambda_{i}$-blocks in~$\mathcal{B}$ that use the $t$-multiset of colors~$P$.

When $N=1$, it is clearly a colored $t$-design.
A \emph{colored} \emph{packing} (resp. \emph{covering}) \emph{design} 
with parameters $t$-$(v,(n(1),n(2),\ldots), \lambda(P))$ 
is a colored design with $\max(\lambda_{i}) = \lambda$ (resp.
$\min(\lambda_{i}) = \lambda$). 
The minimum (resp. maximum) number of blocks of a
covering (resp. packing) design is denoted by $C_{\lambda_{j(1)},...,j(r)}(v,(n(1),\ldots,n(r)),t)$ 
(resp. $D_{\lambda_{j(1)},...,j(r)}(v,(n(1),\ldots,n(r)),t)$).
Note that the $2$-colored packing (resp. covering) designs
is the packing (resp. covering) designs in the sense of  
Bonnecaze et al.~\cite{BMS1999}.

\section{Jacobi polynomials and polarization}\label{Sec:DesignJac}

The MacWilliams type identity for the Jacobi polynomial of 
an $\FF_{q}$-linear code with one reference vector was given in~\cite{Ozeki}. 
In this section, we give the MacWilliams type identity for
the Jacobi polynomial of an $\FF_{q}$-linear code attached to multiple sets 
of coordinate positions of the code.
Bonnecaze et al.~\cite{BRS2000} defined the Aronhold polarization operator, 
and as an application of this operator, 
they obtained a formula to evaluate the Jacobi polynomial 
of a $\ZZ_{4}$-code from the symmetrized weight enumerator of the code. 
Later an analogue of the formula was given in~\cite{BSU2001} for complete weight
enumerators of $\FF_3$-linear codes. 
In this section, we give the generalizations of the polarization operation, 
and using these operators, we evaluate the complete 
Jacobi polynomial of an $\FF_q$-linear code attached to the multiple reference sets.

\begin{df}\label{Def:JacOne}
	Let $C$ be an $\FF_{q}$-linear code of length~$n$. Then 
	the \emph{Jacobi polynomial} attached to a set $T$ of 
	coordinate places of the code~$C$ is defined as follows:
	\[
		J_{C,T}(w,z,x,y) 
		:=
		\sum_{\bm{u}\in C}
		w^{m_{0,T}(\bm{u})}
		z^{m_{1,T}(\bm{u})}
		x^{m_{0,[n]\setminus T}(\bm{u})}
		y^{m_{1,[n]\setminus T}(\bm{u})},
	\]
	where $T\subseteq [n]$, and for $\bm{u}\in C$,
	\begin{align*}
		m_{0,T}(\bm{u}) & := \#\{i\in T \mid u_i=0\},\\
		m_{1,T}(\bm{u}) & := \#\{i \in T \mid u_{i} \neq 0\},\\
		m_{0,[n]\setminus T}(\bm{u}) & := \#\{i\in[n]\setminus T \mid u_i= 0\},\\
		m_{1,[n]\setminus T}(\bm{u})  & := \#\{i\in[n]\setminus T \mid u_i \neq 0\}.
	\end{align*}
\end{df}

\begin{rem}
	If $T \subseteq [n]$ is empty, then $J_{C,T}(w,z,x,y)=W_C(x,y)$.
\end{rem}

\begin{df}\label{Def:CompleteJac}
	Let $C$ be an $\FF_{q}$-linear code of length~$n$. Then 
	the \emph{complete Jacobi polynomial} attached to a set $T$ of 
	coordinate places of the code $C$ is defined as follows:
	\[
		\CJ_{C,T}(\{x_{a},y_{a}\}_{a\in\FF_{q}}) 
		:=
		\sum_{\bm{u}\in C}
		\prod_{a \in \FF_{q}}
		x_{a}^{n_{a,T}(\bm{u})}
		y_{a}^{n_{a,[n]\backslash T}(\bm{u})},
	\]
	where $T\subseteq [n]$, and $n_{a,T}(\bm{u})$ is the composition of 
	$\bm{u}$ on $T$ and $n_{a,[n]\backslash T}(\bm{u})$ is the composition 
	of $\bm{u}$ on $[n]\backslash T$.
\end{df}

\begin{rem}
	$J_{C,T}(w,z,x,y) = \CJ_{C,T}(x_{0} \leftarrow w, \{x_{a} \leftarrow z\}_{0 \neq a\in\FF_{q}},y_{0} \leftarrow x, \{y_{a} \leftarrow y\}_{0 \neq a\in\FF_{q}})$.
\end{rem}

\begin{df}
	Let $C$ be an $\FF_{q}$-linear code of length~$n$.
	Let $X_{1},\ldots, X_{\ell}$ be $\ell$ mutually
	disjoint sets such that 
	$$[n] = X_{1} \sqcup \cdots \sqcup X_{\ell}.$$ 
	Then the \emph{split complete Jacobi polynomial} of $C$ 
	attached to $T_{1},\ldots,T_{\ell}$ such that 
	$T_{i} \subseteq X_{i}$ for all $i$
	is defined by
	\begin{align*}
		\SCJ_{C,X_{1}(T_{1}),\ldots,X_{\ell}(T_{\ell})}
		(\{\{x_{X_{i}, a}, y_{X_{i},a}\}_{a\in\FF_{q}}\}_{1\leq i \leq \ell})
		& := 
		\sum_{\bm{u}\in C} 
		\prod_{i = 1}^{\ell}
		\prod_{a \in \FF_{q}}
		x_{X_{i},a}^{n_{a,T_{i}}(\bm{u})}
		y_{X_{i},a}^{n_{a,X_{i}\setminus T_{i}}(\bm{u})}.
	\end{align*} 
\end{df}

Note that if $\ell=1$, 
the above definition is completely equivalent to the complete Jacobi polynomial with one 
reference vector (Definition~\ref{Def:JacOne}).

\begin{ex}\label{Ex:CompJacPoly}
	Let us consider the code~$C_{4}$ from Example~\ref{Ex:SplitCompWeightEnum}.
	Then the complete Jacobi polynomial of $C_{4}$ attached to a set of coordinate
	places $T=\{1,3\}$ is
	\begin{multline*}
		\CJ_{C_{4},T}
		(x_{0},x_{1},x_{2},y_{0},y_{1},y_{2})
		=
		x_{0}^{2} y_{0}^{2}
		+
		x_{1}^{2} y_{0}^{1} y_{1}^{1}
		+
		x_{2}^{2} y_{0}^{1} y_{2}^{1}
		+
		x_{0}^{1} x_{1}^{1} y_{1}^{1} y_{2}^{1}\\
		+
		x_{1}^{1} x_{2}^{1} y_{0}^{1} y_{1}^{1}
		+
		x_{0}^{1} x_{2}^{1} y_{1}^{2}
		+
		x_{0}^{1} x_{2}^{1} y_{1}^{1} y_{2}^{1}
		+
		x_{0}^{1} x_{1}^{1} y_{2}^{2}
		+
		x_{1}^{1} x_{2}^{1} y_{0}^{1} y_{2}^{1}
	\end{multline*}
	We consider the same $X_{1}$ and $X_{2}$ from Example~\ref{Ex:SplitCompWeightEnum}.
	Let $T_{1} = \{1\}$ and $T_{2} = \{3\}$. 
	Then the split complete Jacobi polynomials of $C_{4}$ attached to 
	$T_{1} \subseteq X_{1}$ and $T_{2} \subseteq X_{2}$ is as follows:
	\begin{multline*}
		\SCJ_{C_{4},X_{1}(T_{1}),X_{2}(T_{2})}
		(\{\{x_{X_{i},a},y_{X_{i},a}\}_{a\in\FF_{q}}\}_{i=1,2})\\
		=
		x_{X_{1},0}^{1} 
		y_{X_{1},0}^{1} 
		x_{X_{2},0}^{1} 
		y_{X_{2},0}^{1} 
		+
		x_{X_{1},1}^{1} 
		y_{X_{1},0}^{1}
		x_{X_{2},1}^{1}
		y_{X_{2},1}^{1}
		+
		x_{X_{1},2}^{1}
		y_{X_{1},0}^{1}
		x_{X_{2},2}^{1}
		y_{X_{2},2}^{1}\\
		+
		x_{X_{1},0}^{1} 
		y_{X_{1},1}^{1}
		x_{X_{2},1}^{1}
		y_{X_{2},2}^{2}
		+
		x_{X_{1},1}^{1}
		y_{X_{1},1}^{1}
		x_{X_{2},2}^{1}
		y_{X_{2},0}^{1}
		+
		x_{X_{1},2}^{1}
		y_{X_{1},1}^{1}
		x_{X_{2},0}^{1}
		y_{X_{2},1}^{1}\\
		+
		x_{X_{1},0}^{1}
		y_{X_{1},2}^{1}
		x_{X_{2},2}^{1}
		y_{X_{2},1}^{1}
		+
		x_{X_{1},1}^{1}
		y_{X_{1},2}^{1}
		x_{X_{2},0}^{1}
		y_{X_{2},2}^{1}
		+
		x_{X_{1},2}^{1}
		y_{X_{1},2}^{1}
		x_{X_{2},1}^{1}
		y_{X_{2},0}^{1}		
	\end{multline*}
\end{ex}

The complete Jacobi polynomial of an $\FF_{q}$-linear code 
attached to multiple sets satisfies the following MacWilliams type identity.

\begin{thm}[MacWilliams Identity]\label{Thm:JacobiMacWilliams}
	Let $C$ be an $\FF_{q}$-linear code of length $n$.
	Again let $\chi$ be a non-trivial character of $\FF_{q}$.
	Then
	\begin{align*}
		\SCJ&_{C^{\perp},X_{1}(T_{1}),\ldots,X_{\ell}(T_{\ell})} 
		(\{\{x_{X_{i},a},y_{X_{i},a}\}_{a \in \FF_{q}}\}_{1\leq i\leq \ell})\\
		& = 
		\dfrac{1}{|C|} 
		\SCJ_{C,X_{1}(T_{1}),\ldots,X_{\ell}(T_{\ell})}
		\left(
		\left\{
			\left\{
				\sum_{b \in \FF_{q}}
				\chi(a b) 
				x_{X_{i},b},
				\sum_{b \in \FF_{q}}
				\chi(a b) 
				y_{X_{i},b}
			\right\}_{a \in \FF_{q}}
		\right\}_{1\leq i \leq \ell}
		\right).
	\end{align*}
\end{thm}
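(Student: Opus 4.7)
The plan is to mimic the standard character-theoretic proof of the MacWilliams identity while carefully tracking the compound partition $[n]=X_{1}\sqcup\cdots\sqcup X_{\ell}$ together with each split $X_{i}=T_{i}\sqcup(X_{i}\setminus T_{i})$. First, I would extend the defining sum of $\SCJ_{C^{\perp},X_{1}(T_{1}),\ldots,X_{\ell}(T_{\ell})}$ from $C^{\perp}$ to all of $\FF_{q}^{n}$ by inserting the indicator $\delta_{C^{\perp}}(\bm{v})$, and then invoke Lemma~\ref{Lem:DualIden} to replace it by $\frac{1}{|C|}\sum_{\bm{u}\in C}\chi(\bm{u}\cdot\bm{v})$. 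After swapping the order of summation, the left-hand side becomes
\begin{align*}
\frac{1}{|C|}\sum_{\bm{u}\in C}\sum_{\bm{v}\in\FF_{q}^{n}}\chi(\bm{u}\cdot\bm{v})\prod_{i=1}^{\ell}\prod_{a\in\FF_{q}}x_{X_{i},a}^{n_{a,T_{i}}(\bm{v})}y_{X_{i},a}^{n_{a,X_{i}\setminus T_{i}}(\bm{v})}.
\end{align*}

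The key step is to exploit that both factors in the summand decouple coordinate-wise. Writing $\chi(\bm{u}\cdot\bm{v})=\prod_{j=1}^{n}\chi(u_{j}v_{j})$ and using the identities $\prod_{a\in\FF_{q}}x_{X_{i},a}^{n_{a,T_{i}}(\bm{v})}=\prod_{j\in T_{i}}x_{X_{i},v_{j}}$ and $\prod_{a\in\FF_{q}}y_{X_{i},a}^{n_{a,X_{i}\setminus T_{i}}(\bm{v})}=\prod_{j\in X_{i}\setminus T_{i}}y_{X_{i},v_{j}}$, the inner sum over $\bm{v}\in\FF_{q}^{n}$ factors into a product of $n$ independent one-variable sums, one per coordinate $j\in[n]$:
\begin{align*}
\prod_{i=1}^{\ell}\Bigl[\,\prod_{j\in T_{i}}\Bigl(\sum_{b\in\FF_{q}}\chi(u_{j}b)\,x_{X_{i},b}\Bigr)\prod_{j\in X_{i}\setminus T_{i}}\Bigl(\sum_{b\in\FF_{q}}\chi(u_{j}b)\,y_{X_{i},b}\Bigr)\Bigr].
\end{align*}

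Finally, I would regroup these single-coordinate factors by the value $a=u_{j}\in\FF_{q}$: the number of $j\in T_{i}$ with $u_{j}=a$ equals $n_{a,T_{i}}(\bm{u})$, and the number of $j\in X_{i}\setminus T_{i}$ with $u_{j}=a$ equals $n_{a,X_{i}\setminus T_{i}}(\bm{u})$. Substituting these exponents immediately yields $\SCJ_{C,X_{1}(T_{1}),\ldots,X_{\ell}(T_{\ell})}$ evaluated at the substitutions $x_{X_{i},a}\leftarrow\sum_{b\in\FF_{q}}\chi(ab)\,x_{X_{i},b}$ and $y_{X_{i},a}\leftarrow\sum_{b\in\FF_{q}}\chi(ab)\,y_{X_{i},b}$, which is precisely the right-hand side.

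The computation is essentially bookkeeping and presents no deep obstacle; the only point requiring care is keeping the nested partition consistent, so that after the regrouping each exponent lands on the correctly labelled variable $x_{X_{i},a}$ or $y_{X_{i},a}$. As a sanity check, setting $T_{i}=\emptyset$ for every $i$ and suppressing the $x$-variables recovers Theorem~\ref{Thm:SplitCompMacWilliams}, and taking $\ell=1$ recovers the classical complete-Jacobi MacWilliams identity attached to a single reference set.
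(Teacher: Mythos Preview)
Your proposal is correct and follows essentially the same approach as the paper: insert $\delta_{C^{\perp}}$, apply Lemma~\ref{Lem:DualIden}, factor $\chi(\bm{u}\cdot\bm{v})$ and the monomial coordinate-wise, sum independently over each $v_{j}$, and regroup by the value $u_{j}=a$. The paper's proof is line-for-line the same computation, without your closing sanity checks.
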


\begin{proof}
	By Lemma~\ref{Lem:DualIden}, we can write
	
	\begin{align*}
		\SCJ&_{C^{\perp},X_{1}(T_{1}),\ldots,X_{\ell}(T_{\ell})} 
		(\{\{x_{X_{i},a},y_{X_{i},a}\}_{a \in \FF_{q}}\}_{1\leq i\leq \ell})\\
		& = 
		\sum_{\bm{u}\in C^{\perp}} 
		\prod_{i = 1}^{\ell}
		\prod_{a \in \FF_{q}}
		x_{X_{i},a}^{n_{a,T_{i}}(\bm{u})}
		y_{X_{i},a}^{n_{a,X_{i}\setminus T_{i}}(\bm{u})}\\
		& = 
		\sum_{\bm{v} \in \FF_{q}^{n}} 
		\delta_{C^\perp}(\bm{v}) 
		\prod_{i = 1}^{\ell}
		\prod_{a \in \FF_{q}}
		x_{X_{i},a}^{n_{a,T_{i}}(\bm{v})}
		y_{X_{i},a}^{n_{a,X_{i}\setminus T_{i}}(\bm{v})}\\
		& =
		\dfrac{1}{|C|} 
		\sum_{\substack{\bm{u} \in C\\ 
			\bm{v}\in \FF_{q}^{n}}}  
		\chi(\bm{u} \cdot \bm{v}) 
		\prod_{i = 1}^{\ell}
		\prod_{a \in \FF_{q}}
		x_{X_{i},a}^{n_{a,T_{i}}(\bm{v})}
		y_{X_{i},a}^{n_{a,X_{i}\setminus T_{i}}(\bm{v})}\\
		& = 
		\dfrac{1}{|C|}
		\sum_{\substack{\bm{u} \in C\\ 
				\bm{v} \in \FF_{q}^{n}}} 
		\chi(u_{1}v_{1} + \cdots + u_{n}v_{n})
		\prod_{i = 1}^{\ell}
		\left(
		\prod_{j \in T_{i}}
		x_{X_{i},v_{j}}
		\right)
		\left(
		\prod_{j \in X_{i}\setminus T_{i}}
		y_{X_{i},v_{j}}
		\right)\\
		& = 
		\dfrac{1}{|C|}
		\sum_{\bm{u} \in C}
		\prod_{i = 1}^{\ell}
		\left(
		\prod_{j \in T_{i}} 
		\sum_{v_{j} \in \FF_{q}} 
		\chi(u_{j}v_{j}) 
		x_{X_{i},v_{j}} 
		\right)
		\left(
		\prod_{j \in X_{i}\setminus T_{i}} 
		\sum_{v_{j} \in \FF_{q}} 
		\chi(u_{j}v_{j}) 
		y_{X_{i},v_{j}} 
		\right)\\
		& = 
		\dfrac{1}{|C|}
		\sum_{\bm{u} \in C}
		\prod_{i=1}^{\ell}
		\prod_{a\in\FF_{q}} 
		\left(
			\sum_{b \in \FF_{q}}
			\chi(ab) 
			x_{X_{i},b}
		\right)
		^{n_{a,T_{i}}(\bm{u})}
		\left(
		\sum_{b \in \FF_{q}}
		\chi(ab) 
		y_{X_{i},b}
		\right)
		^{n_{a,X_{i}\setminus T_{i}}(\bm{u})}  \\
		& = 
		\dfrac{1}{|C|}
		\SCJ_{C,X_{1}(T_{1}),\ldots,X_{\ell}(T_{\ell})}
		\left(
		\left\{
		\left\{
		\sum_{b \in \FF_{q}}
		\chi(a b) 
		x_{X_{i},b},
		\sum_{b \in \FF_{q}}
		\chi(a b) 
		y_{X_{i},b}
		\right\}_{a \in \FF_{q}}
		\right\}_{1\leq i \leq \ell}
		\right).	
	\end{align*}
	Hence the proof is completed.
\end{proof}

The following result reflects the basic motivation to introduce 
the concept of split complete Jacobi polynomials attached to multiple sets. 
We omit the proof of the theorem since it follows from the definitions. 

\begin{thm}\label{Thm:JacToDesign}
	Let $C$ be a linear code of length~$n$ over $\FF_{q}$.
	Let $\bm{v} := (v_{1},\ldots,v_{\ell})$ such that $\sum_{i}^{\ell}v_{i} = n$.
	Let $\bm{X} := (X_{1},\ldots,X_{\ell})$ of pairwise disjoint set 
	$X_{i}\subseteq [n]$ 
	with $|X_{i}| = v_{i}$ for all $i$.
	Then the set of codewords of $C$ for every given 
	split composition forms a generalized $q$-colored $t$-design
	with $\bm{t} = (t_{1},\ldots,t_{\ell})$ such that 
	$\sum_{i = 1}^{\ell}t_{i} = t$
	if and only if
	the split complete Jacobi polynomial 
	$\SCJ_{C,X_{1}(T_{1}),\ldots,X_{\ell}(T_{\ell})}$ 
	with $T_{i} \in \binom{X_{i}}{t_{i}}$
	for all $i$ is independent of the choices of the sets $T_{1},\ldots,T_{\ell}$.
\end{thm}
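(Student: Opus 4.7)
The plan is to expand the split complete Jacobi polynomial by grouping the codewords of $C$ according to their split composition $\bm{s}$ and, within each class, according to the ``color pattern'' they induce on $\bm{T}=(T_1,\ldots,T_\ell)$. For any $\bm{u}\in C_{\bm{s}}$ with $s_i=(s_{i,a})_{a\in\FF_q}$ one has $n_{a,X_i\setminus T_i}(\bm{u})=s_{i,a}-n_{a,T_i}(\bm{u})$. Setting $\bm{j}=(j_i(a))_{i,a}$ with $\sum_{a\in\FF_q}j_i(a)=t_i$ and
\[
  N_{\bm{s},\bm{j}}(\bm{T}):=\#\{\bm{u}\in C_{\bm{s}}\mid n_{a,T_i}(\bm{u})=j_i(a)\text{ for all }i\text{ and all }a\in\FF_q\},
\]
the definition of $\SCJ$ rearranges to
\[
  \SCJ_{C,X_1(T_1),\ldots,X_\ell(T_\ell)}
  =\sum_{\bm{s}}\sum_{\bm{j}}N_{\bm{s},\bm{j}}(\bm{T})\prod_{i=1}^{\ell}\prod_{a\in\FF_q}x_{X_i,a}^{j_i(a)}\,y_{X_i,a}^{s_{i,a}-j_i(a)}.
\]

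Next I would observe that as $(\bm{s},\bm{j})$ ranges over admissible pairs the monomials on the right are pairwise distinct: from the $x$-degrees in the block $X_i$ one reads off $j_i(a)$, and from the sum of the $x$- and $y$-degrees one recovers $s_{i,a}$. Consequently, the polynomial is independent of $\bm{T}$ (subject to $|T_i|=t_i$) if and only if every coefficient $N_{\bm{s},\bm{j}}(\bm{T})$ is independent of $\bm{T}$, for every admissible $(\bm{s},\bm{j})$.

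Finally, I would translate the condition on $N_{\bm{s},\bm{j}}(\bm{T})$ into the design-theoretic language of Section~\ref{Sec:GenColrDesign}. For a fixed split composition $\bm{s}$ the blocks $\mathcal{B}(C_{\bm{s}})$ with the coloring $\rho_i(p,b)=u_p$ form a uniform colored incidence structure with palette $n_i(a)=s_{i,a}$, and a $t_i$-multiset $C_i$ of colors is encoded by its count vector $(j_i(a))_{a\in\FF_q}$. Under this dictionary $N_{\bm{s},\bm{j}}(\bm{T})$ is exactly the count $\lambda$ that appears in the definition of a generalized colored $t$-design corresponding to the palette choice $\bm{j}$. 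Its independence of $\bm{T}$ for every such $\bm{j}$ is, verbatim, the assertion that $C_{\bm{s}}$ is a generalized $q$-colored $t$-design with $\bm{t}=(t_1,\ldots,t_\ell)$; combining this equivalence over all $\bm{s}$ yields the theorem. The main obstacle is purely bookkeeping: matching the two encodings of a color palette, as a multiset of colors on the design side and as an exponent vector on the polynomial side, so that $N_{\bm{s},\bm{j}}(\bm{T})$ and the defining $\lambda$ of the design literally coincide. Once this dictionary is fixed, the biconditional is an immediate consequence of the linear independence of the monomials established in the second step.
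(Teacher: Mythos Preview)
Your proposal is correct and is precisely the unpacking of the definitions that the paper has in mind; the paper itself omits the proof entirely, stating only that ``it follows from the definitions.'' Your explicit identification of the coefficient $N_{\bm{s},\bm{j}}(\bm{T})$ with the design-theoretic $\lambda$, together with the observation that the monomials indexed by $(\bm{s},\bm{j})$ are distinct, is exactly the bookkeeping needed to make that sentence rigorous.
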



Let $C$ be a linear code of length~$n$ over $\FF_{q}$.
Then the code $C-i$ (resp. $C/i$) obtained from~$C$ by \emph{puncturing} (resp. \emph{shortening}) at coordinate place $i$. 
We denote by $C+i_{a}$ for all $a \in \FF_{q}$ the subcodes of $C$ where the
$i$-th entry of each codeword takes the value $a$ punctured at $i$.

Let $\ell$, $n$ be the positive integers. 
Let $\bm{v}:= (v_{1},\ldots,v_{\ell})$ 
such that $\sum_{i=1}^{\ell} v_{i} = n$. 
Let $P(\{\{x_{X_{i},a},y_{X_{i},a}\}_{a \in \FF_{q}}\}_{1\leq i \leq \ell})$ be a polynomial of degree~$n$ in $2q\ell$ variables 
such that in its each term the sum of the powers of 
$x_{X_{i},a}$ and $y_{X_{i},a}$ for all 
$a\in\FF_{q}$ is~$v_{i}$ for all~$i$. 
Again let $P'_{k,a}$ denote the partial derivative with respect to 
$y_{X_{k},a}$
for any integer $1\leq k \leq \ell$ and $a\in\FF_{q}$. 
Define the polarization operator 
$A_{2m\ell,k}$ 
for any integer $1 \leq k \leq \ell$
as follows:
\begin{equation*}\label{Equ:Operator}
	A_{2q\ell,k}
	\cdot
	 P
	:= 
	\frac{1}{v_{k}}
	\sum_{a\in\FF_{q}}
	x_{X_{k},a} 
	P_{k,a}'.
\end{equation*}
For $\ell = 1$, 
it convenient to denote the polarization operator as $A_{2q}$
instead of $A_{2q\ell,k}$. The detail of this particular case
is as follows:

Let $P(\{x_{a},y_{a}\}_{a\in\FF_{q}})$ be a polynomial of degree~$n$
in $2q$ variables.
Let~$P_{y_{a}}'$ be the partial derivative with respect to $y_{a}$
for $a\in\FF_{q}$. Then
\[
	A_{2q}
	\cdot
	P
	:= 
	\frac{1}{n}
	\sum_{a \in \FF_{q}}
	x_{a} 
	P_{y_{a}}'.	
\]

Now we have the following generalization of the $\FF_{q}$-analogue of~\cite[Theorem 1]{BRS2000}.

\begin{thm}\label{Thm:Main1}
	Let $C$ be a linear code of length $n$ over $\FF_{q}$. 
	Let $\bm{v} := (v_{1},\ldots,v_{\ell})$ 
	such that $\sum_{i=1}^{\ell} v_{i} = n$.
	We also let $X_{1},\ldots,X_{\ell}$
	be the mutually disjoint subsets of $[n]$ such that $X_{1} \sqcup\cdots\sqcup X_{\ell} = [n]$ and $|X_{i}| = v_{i}$ for all $i$. 
	Then for every coordinate place $i \in X_{k}$ for $1\leq k \leq \ell$, we have
	\begin{multline*}
		\SCJ_{C,X_{1}(\emptyset),\ldots,X_{k-1}(\emptyset),X_{k}(\{i\}),X_{k+1}(\emptyset),\ldots,X_{\ell}(\emptyset)}\\
		=
		x_{X_{k},0} 
		\scwe_{C/i,X_{1},\ldots,X_{k-1},X_{k},X_{k+1},\ldots,X_{\ell}} \\
		+ 
		\sum_{a \in \FF_{q}, a\neq 0}
		x_{X_{k},a}
		\scwe_{C+i_{a},X_{1},\ldots,X_{k-1},X_{k},X_{k+1},\ldots,X_{\ell}}.
	\end{multline*}
	If $C$ contains no nonzero codewords of Hamming weight less than~$1$,
	we have
	\begin{multline*}
		v_{i}
		(A_{2q\ell,k} 
		\cdot
		\scwe_{C,X_{1},\ldots,X_{k-1},X_{k},X_{k+1},\ldots,X_{\ell}})\\
		=
		x_{X_{k},0} 
		\sum_{i\in X_{k}}
		\scwe_{C/i,X_{1},\ldots,X_{k-1},X_{k},X_{k+1},\ldots,X_{\ell}}\\
		+
		\sum_{a \in \FF_{q}, a\neq 0}
		x_{X_{k},a}
		\sum_{i\in X_{k}}
		\scwe_{C+i_{a},X_{1},\ldots,X_{k-1},X_{k},X_{k+1},\ldots,X_{\ell}}.
	\end{multline*}
	If $C$ is a generalized colorwise homogeneous code, 
	then 
	\[
		\SCJ_{C,X_{1}(\emptyset),\ldots,X_{k-1}(\emptyset),X_{k}(\{i\}),X_{k+1}(\emptyset),\ldots,X_{\ell}(\emptyset)}
		=
		A_{{2q\ell,k}}
		\cdot
		\scwe_{C,X_{1},\ldots,X_{k-1},X_{k},X_{k+1},\ldots,X_{\ell}}.
	\]
\end{thm}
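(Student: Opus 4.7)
I plan to prove the three assertions in sequence, each building on the previous. The entire argument is essentially a bookkeeping exercise on top of the definitions, supplemented by a single invocation of Theorem~\ref{Thm:JacToDesign}.

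For the first identity I expand $\SCJ_{C,X_1(\emptyset),\ldots,X_k(\{i\}),\ldots,X_\ell(\emptyset)}$ directly from its definition. Because only $T_k=\{i\}$ is nonempty, each codeword $\bm{u}\in C$ contributes
\[
x_{X_k, u_i}\prod_{a\in\FF_q} y_{X_k, a}^{n_{a, X_k\setminus\{i\}}(\bm{u})}\prod_{j\neq k}\prod_{a\in\FF_q} y_{X_j, a}^{n_{a, X_j}(\bm{u})}.
\]
Partitioning $C$ according to the value $a:=u_i$, and using that $\bm{u}\mapsto \bm{u}|_{[n]\setminus\{i\}}$ identifies $\{\bm{u}\in C : u_i=0\}$ with $C/i$ and $\{\bm{u}\in C : u_i=a\}$ with $C+i_a$ for $a\neq 0$, the terms collect into the claimed expression $x_{X_k,0}\,\scwe_{C/i,\ldots}+\sum_{a\neq 0} x_{X_k, a}\,\scwe_{C+i_a,\ldots}$.

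For the second identity I sum the first over $i\in X_k$ and identify the left-hand side with $v_k(A_{2q\ell,k}\cdot \scwe_C)$. The key computation, obtained by performing the partial derivatives and then using the elementary rewriting $\sum_{a\in\FF_q} n_{a, X_k}(\bm{u})\,g(a) = \sum_{i\in X_k} g(u_i)$, is
\[
\sum_{a\in\FF_q} x_{X_k, a}\,\frac{\partial \scwe_C}{\partial y_{X_k, a}} = \sum_{\bm{u}\in C} \sum_{i\in X_k} x_{X_k, u_i}\prod_{a\in\FF_q} y_{X_k, a}^{n_{a, X_k\setminus\{i\}}(\bm{u})}\prod_{j\neq k}\prod_{a\in\FF_q} y_{X_j, a}^{n_{a, X_j}(\bm{u})}.
\]
The right-hand side is exactly $\sum_{i\in X_k}\SCJ_{C,\ldots,X_k(\{i\}),\ldots}$, and substituting the first identity into each summand produces the claimed formula.

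Finally, for the third identity, I apply Theorem~\ref{Thm:JacToDesign} with $\bm{t}=(0,\ldots,0,1,0,\ldots,0)$ (a single $1$ in position $k$): the hypothesis that $C$ is generalized colorwise homogeneous makes the codewords of fixed split composition a generalized colored $1$-design, so $\SCJ_{C,\ldots, X_k(\{i\}),\ldots}$ is independent of the choice of $i\in X_k$. Averaging the second identity over $i\in X_k$ then forces each summand to equal $A_{2q\ell,k}\cdot \scwe_C$. The main obstacle is interpreting the polarization operator $A_{2q\ell,k}$ acting on $\scwe_C$, which in our notation carries only the variables $x_{X_i, a}$ and no $y$-variables; the correct reading is to view each $x_{X_k, a}$ of $\scwe_C$ as a $y_{X_k, a}$ for the purpose of the partial derivative, so that the $x_{X_k, a}$'s appearing on the right of $A_{2q\ell,k}\cdot \scwe_C$ are the newly introduced factors. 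Once this relabelling convention is fixed, the remaining steps are straightforward rearrangements of sums.
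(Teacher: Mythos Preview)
Your argument is correct and is precisely what the paper intends when it defers to ``the similar arguments given in~\cite[Theorem~1]{BRS2000}'': direct expansion and partition by the value of $u_i$ for the first identity, term-by-term differentiation together with the rewriting $\sum_{a} n_{a,X_k}(\bm{u})\,g(a)=\sum_{i\in X_k} g(u_i)$ for the second, and independence via Theorem~\ref{Thm:JacToDesign} plus averaging for the third. Your remark on the variable-relabelling convention needed to make sense of $A_{2q\ell,k}\cdot\scwe_C$ is also the intended reading.
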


\begin{proof}
	The proof follows the similar arguments given in~\cite[Theorem 1]{BRS2000}.
	So we omit the details.
\end{proof}

The following theorem is an analogue of the above theorem for $t > 1$. 
We leave the proof for the readers.

\begin{thm}\label{Thm:Main2}
	Let $C$ be a linear code of length $n$ over $\FF_{q}$. 
	Let $\bm{v} := (v_{1},\ldots,v_{\ell})$ 
	such that $\sum_{i=1}^{\ell} v_{i} = n$.
	We also let $X_{1},\ldots,X_{\ell}$
	be the mutually disjoint subsets of $[n]$ such that $X_{1} \sqcup\cdots\sqcup X_{\ell} = [n]$ and $|X_{k}| = v_{k}$ for all $k$.
	If $C$ is a generalized colorwise $t$-homogeneous and contains no codeword of
	Hamming weight less than $t$, 
	then for $\bm{t} := (t_{1},\ldots,t_{\ell})$ 
	such that $\sum_{i=1}^{\ell}t_{i} = t$, 
	we have
	\[
		\SCJ_{X_{1}(T_1),\ldots,X_{\ell}(T_{\ell})}
		=
		A_{2q\ell,\ell}^{t_{\ell}}\cdots A_{2q\ell,1}^{t_{1}}
		\cdot
		\scwe_{C,X_{1},\ldots,X_{\ell}},
	\] 
	for each $(T_{1},\ldots,T_{\ell}) \in \binom{X_{1}}{t_{1}} \times \cdots \times \binom{X_{\ell}}{t_{\ell}}$.
\end{thm}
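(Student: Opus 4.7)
My plan is to proceed by induction on $t = t_1 + \cdots + t_\ell$, with Theorem~\ref{Thm:Main1} serving as the base case $t = 1$. For the inductive step, I choose any index $k$ with $t_k \geq 1$ and an element $i_0 \in T_k$, set $T_k' := T_k \setminus \{i_0\}$, and aim to show that $\SCJ_{C, X_1(T_1), \ldots, X_k(T_k), \ldots, X_\ell(T_\ell)}$ equals $A_{2q\ell, k}$ applied to $\SCJ_{C, X_1(T_1), \ldots, X_k(T_k'), \ldots, X_\ell(T_\ell)}$. Once this is in hand, the inductive hypothesis applied to $\bm{t}' = (t_1, \ldots, t_k - 1, \ldots, t_\ell)$ rewrites the latter as $A_{2q\ell, \ell}^{t_\ell} \cdots A_{2q\ell, k}^{t_k - 1} \cdots A_{2q\ell, 1}^{t_1} \cdot \scwe_{C, X_1, \ldots, X_\ell}$, finishing the proof. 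The commutativity of $A_{2q\ell, k}$ and $A_{2q\ell, k'}$ for $k \neq k'$, stemming from the disjointness of the variable groups on which they act, justifies any ordering in the outer product.

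The computational core is the identity
\[
    A_{2q\ell, k} \cdot \SCJ_{C, X_1(T_1), \ldots, X_k(T_k'), \ldots, X_\ell(T_\ell)} = \frac{1}{v_k} \sum_{i \in X_k \setminus T_k'} \SCJ_{C, X_1(T_1), \ldots, X_k(T_k' \cup \{i\}), \ldots, X_\ell(T_\ell)},
\]
obtained by unwinding the definition of $A_{2q\ell, k}$: the derivative $\partial / \partial y_{X_k, a}$ brings down the exponent $n_{a, X_k \setminus T_k'}(\bm{u}) = \sum_{j \in X_k \setminus T_k'} \mathbf{1}[u_j = a]$, and multiplication by $x_{X_k, a}$ followed by summation over $a$ reorganizes the result as a sum over coordinates $j \in X_k \setminus T_k'$ being migrated from the $y$-block to the $x$-block of the $X_k$ variables. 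Theorem~\ref{Thm:JacToDesign} together with the generalized colorwise $t$-homogeneity hypothesis then forces every summand on the right to coincide with the common value $\SCJ_{C, X_1(T_1), \ldots, X_k(T_k), \ldots, X_\ell(T_\ell)}$ for any $T_k \supset T_k'$ of size $t_k$, and the sum collapses to this value.

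The step I anticipate as the main obstacle is the bookkeeping of the normalization under iteration. The operator $A_{2q\ell, k}$ always divides by the fixed $v_k$, while each application reduces the effective $y_{X_k, \cdot}$-degree by one; hence one must verify that the count $|X_k \setminus T_k'| = v_k - t_k + 1$ of surviving summands at the $t_k$-th step combines correctly with the outer $1/v_k$ to preserve the equality at every level. The hypothesis that $C$ contains no codeword of Hamming weight less than $t$ is precisely what ensures no codeword's contribution is annihilated prematurely by the successive derivatives, so that this counting argument is faithful. Together with the need to check that the $t$-homogeneity property (via Theorem~\ref{Thm:JacToDesign}) propagates to every intermediate cardinality encountered in the induction, this is the delicate part of the verification.
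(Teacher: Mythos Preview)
The paper does not supply a proof of this theorem; it simply presents the statement as the $t>1$ analogue of Theorem~\ref{Thm:Main1} and leaves the verification to the reader. Your inductive scheme --- peeling one coordinate off some $T_k$ and invoking the case $t=1$ at the base --- together with the identity
\[
A_{2q\ell,k}\cdot \SCJ_{C,\ldots,X_k(T_k'),\ldots} \;=\; \frac{1}{v_k}\sum_{j\in X_k\setminus T_k'} \SCJ_{C,\ldots,X_k(T_k'\cup\{j\}),\ldots}
\]
and Theorem~\ref{Thm:JacToDesign} for the collapse of the sum, is exactly the expected argument.

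That said, the normalization issue you flag is not a bookkeeping detail to be checked but a genuine discrepancy under the paper's literal definition. With the denominator fixed at $v_k$, the right-hand side above collapses (after homogeneity) to $\dfrac{v_k-|T_k'|}{v_k}\,\SCJ_{C,\ldots,X_k(T_k),\ldots}$, which is off by a nontrivial factor as soon as $|T_k'|\geq 1$; one sees this already on the contribution of the zero codeword, since $A_{2q}^2$ applied to $y_0^n$ with denominator $n$ each time produces $\tfrac{n-1}{n}\,x_0^2y_0^{n-2}$ rather than $x_0^2y_0^{n-2}$. The resolution is that the polarization operator, as it functions in \cite{BRS2000,BSU2001} (whence Lemmas~\ref{Lem:BSULem3} and~\ref{Lem:BSULem4}), must carry a denominator equal to the current $y_{X_k}$-degree of its operand, which decreases by one with each application; under that reading $|X_k\setminus T_k'|$ cancels the denominator exactly and your induction closes. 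You should state this outright rather than leave it as an unresolved obstacle. A minor correction: the minimum-weight hypothesis is not actually used in the computation you outline --- no codeword's contribution is ``annihilated'' by the derivatives regardless --- so your explanation of its role is inaccurate; the hypothesis is inherited from \cite{BRS2000}, where the argument runs through iterated puncturing and shortening rather than direct differentiation of the enumerator.
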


\section{Application to colored design theory}\label{Sec:Examples}

Bonnecaze et al.~\cite{BSU2001} 
gave colored $t$-design structures using Type~III codes. 
In this section, using their idea, we construct some (generalized) colored
designs such as colored packing (resp. covering) designs 
using Type~III and Type~IV codes.

\subsection{Invariant theory}

Let $G$ be a finite $n\times n$ matrix group that acts 
on a polynomial ring $\CC[x_0,\ldots,x_{n-1}]$; 
for $g\in G$ and $f(x_0,\ldots,x_{n-1})\in \CC[x_0,\ldots,x_{n-1}]$, 
\[
	gf(x_0,\ldots,x_{n-1})=f(g(x_0,\ldots,x_{n-1})^t). 
\]
Then 
\[
	\widehat{G}
	=
	\left\{
	\left.
	\begin{pmatrix}
		g&0\\
		0&g
	\end{pmatrix}
	\right\vert 
	g\in G
	\right\}
\]
acts on a polynomial ring 
$\CC[x_0,\ldots,x_{n-1},y_0,\ldots,y_{n-1}]$ in a natural way. 
Let $M^{\widehat{G}}_{i,j}
=\CC[x_0,\ldots,x_{n-1},y_0,\ldots,y_{n-1}]^{\widehat{G}}_{i,j}$ 
be the invariants of degree $(i,j)$: 
\begin{align*}
	&\CC[x_0,\ldots,x_{n-1},y_0,\ldots,y_{n-1}]^{\widehat{G}}_{i,j}\\
	&=\{f\in \CC[x_0,\ldots,x_{n-1},y_0,\ldots,y_{n-1}]\mid 
	(g,h)f=f, \mbox{degree of $f$ in $\{x_k\}$ is $i$}, \\
	&\hspace{30pt}\mbox{degree of $f$ in $\{y_k\}$ is $j$}\}. 
\end{align*}
In \cite{Stanley1979}, Stanley defined
the bivariate Molien series 
\[
	f(u,v)
	=\sum_{u,v}\dim (M^{\widehat{G}}_{i,j})u^iv^j,
\]
and showed that $f(u,v)$ is written as follows: 
\[
	f(u,v)
	=
	\frac{1}{|G|}
	\sum_{g\in G}
	\frac{1}{\det{(1-ug)}\det{(1-vg)}}.
\]

We denote the homogeneous part of degree $d$ of $f(u,v)$ by $f[d]$.
To obtain an invariant, the Reynolds operator is useful. 
For $f\in \CC[x_0,\ldots,x_{n-1},y_0,\ldots,y_{n-1}]$, 
the Reynolds operator of $f$ and $\widehat{G}$ is defined as follows: 
\[
	R(f,\widehat{G})
	:=
	\sum_{(g,g)\in \widehat{G}}
	(g,g)\cdot f. 
\]
Then it is easy to show that $R(f,\widehat{G})$ is an invariant of $\widehat{G}$.

\subsection{Type III codes}

The MacWilliams transform and some congruence conditions yield that 
the complete weight enumerator of a Type~III code remains invariant under the action of 
group $G_{\rm{III}}$ of order $2592$ which is generated by the following four matrices:
\begin{align*}
	\frac{1}{\sqrt{3}}
	&\begin{bmatrix}
		1&1&1\\
		1&e^{2\pi i/3}&e^{4\pi i/3}\\
		1&e^{4\pi i/3}&e^{2\pi i/3}
	\end{bmatrix},\;
	\begin{bmatrix}
		1&0&0\\
		0&e^{2\pi i/3}&0\\
		0&0&1
	\end{bmatrix},\;\\
	&\begin{bmatrix}
		1&0&0\\
		0&1&0\\
		0&0&e^{2\pi i/3}
	\end{bmatrix},\;
	\begin{bmatrix}
		e^{\pi i/6}&0&0\\
		0&e^{\pi i/6}&0\\
		0&0&e^{\pi i/6}
	\end{bmatrix}.
\end{align*}
It is easy to show that 
a split complete weight enumerator of a Type III code 
is an invariant of $\widehat{G_{\rm{III}}}$. 

Now we assume that $G$ is a $2\times 2$ matrix group and 
$\widehat{G}$ acts on $\CC[x_0,x_1,y_0,y_1]$. 
Let $P\in \CC[x_0,x_1,y_0,y_1]$ be a polynomial of total degree~$n$. 
Now define a polarization operator 
$A_4$ as follows:
\[
A_4 \cdot P
:=
\frac{x_0P'_{y_0}+x_1P'_{y_1}}{n}.
\]

\begin{df}
	A linear code of length~$n$ over $\FF_{3}$ 
	is said to be $t$-homogeneous if the codewords of 
	every given Hamming weight hold a $t$-design.
\end{df}

\begin{df}
	An $\FF_{3}$-linear code of length~$n$ is said to be 
	\emph{colorwise $t$-homogeneous} if the codewords of every 
	given composition hold a $3$-colored $t$-design.
\end{df}

\begin{lem}[\cite{BSU2001}]\label{Lem:BSULem3}
	Let $C$ be a linear code of length~$n$ over $\FF_{3}$. 
	If~$C$ is $t$-homogeneous 
	with no non-zero words of Hamming weight less than~$t$,
	then for all $T$ of size $t$ we get
	\[
		J_{C,T} = A_4^t \cdot W_C.
	\]
\end{lem}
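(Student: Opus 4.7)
The plan is to show $A_4^t \cdot W_C$ equals the average of the Jacobi polynomials $J_{C,T}$ over all $t$-subsets $T \subseteq [n]$, and then invoke $t$-homogeneity together with the minimum-weight hypothesis to collapse this average to a single $J_{C,T}$.

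The base case is a direct computation. Writing $W_C(y_0,y_1) = \sum_{\bm{u}\in C} y_0^{n-\wt(\bm{u})} y_1^{\wt(\bm{u})}$ and using that $n-\wt(\bm{u}) = \#\{i : u_i = 0\}$ and $\wt(\bm{u}) = \#\{i : u_i \neq 0\}$, the derivatives regroup coordinate by coordinate to give
\[
	A_4 \cdot W_C = \frac{1}{n}\sum_{i=1}^n J_{C,\{i\}}.
\]
More generally I would prove by induction on $t$, interpreting $A_4$ in the Bonnecaze--Rains--Sol\'e convention (normalizing by the current $y$-degree $n-|T'|$ when applied to $J_{C,T'}$), that
\[
	A_4 \cdot J_{C,T'} = \frac{1}{n-|T'|} \sum_{i \in [n]\setminus T'} J_{C, T'\cup\{i\}},
\]
which telescopes to
\[
	A_4^t \cdot W_C = \frac{1}{\binom{n}{t}} \sum_{T \subseteq [n],\, |T|=t} J_{C,T}.
\]

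It remains to show that all $J_{C,T}$ with $|T|=t$ coincide. The coefficient of $x_0^{t-j} x_1^{j} y_0^{n-t-w+j} y_1^{w-j}$ in $J_{C,T}$ equals $\#\{\bm{u}\in C : \wt(\bm{u})=w,\ |\supp(\bm{u})\cap T|=j\}$. Under the hypothesis that $C$ has no nonzero codewords of weight less than $t$ and is $t$-homogeneous, the codewords of each weight $w$ (which must then satisfy $w \geq t$) form a genuine $t$-design; inclusion--exclusion applied to the number of $j$-subsets of $T$ lying in $\supp(\bm{u})$ then shows this count depends only on $w,j,t$ and is independent of $T$. Hence the sum above collapses to $\binom{n}{t} J_{C,T}$, giving $A_4^t \cdot W_C = J_{C,T}$.

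The main obstacle is the last step: the minimum-weight hypothesis is essential, because for weights $w < t$ the $t$-design condition on weight-$w$ codewords is vacuous, and the composition count $\#\{\bm{u} : \wt(\bm{u})=w,\ |\supp(\bm{u})\cap T|=j\}$ can vary genuinely with $T$, destroying the $T$-independence of $J_{C,T}$. A secondary bookkeeping issue is the normalization convention for $A_4$ across iterations; interpreting the ``$n$'' in the definition as the $y$-degree of the polynomial to which $A_4$ is currently applied is what makes the telescoping produce the clean factor $1/\binom{n}{t}$.
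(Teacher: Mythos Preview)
The paper does not prove this lemma; it is quoted from \cite{BSU2001} (and is in fact the $\ell=1$, two-variable specialization of Theorem~\ref{Thm:Main2}, whose proof the paper also leaves to the reader). So there is no in-paper argument to compare against.

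Your argument is correct and is the standard one: the identity $[x_0\partial_{y_0}+x_1\partial_{y_1}]\,J_{C,T'}=\sum_{i\notin T'}J_{C,T'\cup\{i\}}$ gives, after iteration and the appropriate normalization, $A_4^t\cdot W_C=\binom{n}{t}^{-1}\sum_{|T|=t}J_{C,T}$; then $t$-homogeneity plus the minimum-weight hypothesis forces all $J_{C,T}$ to coincide via the usual intersection-number computation for $t$-designs. Your remark about the normalization is on point: with the paper's literal definition (dividing by the \emph{total} degree, which is preserved by $A_4$), iterating would produce a factor $t!\binom{n}{t}/n^t$ rather than $1$, so the identity $J_{C,T}=A_4^t\cdot W_C$ only holds verbatim under the convention you adopt (divide by the current $y$-degree), which is indeed the Aronhold polarization convention used in \cite{BRS2000,BSU2001}. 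This is a definitional wrinkle in the paper rather than a flaw in your proof.
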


Let $P$ be a polynomial of total degree $n$ 
in $6$ variables $x_0,x_1,x_2,y_0,y_1,y_2$.
Now define a polarization operator 
$A_6$  and specialization operator~$S$ as follows:
\begin{align*}
	A_6 \cdot P
	&:=
	\frac{x_0P'_{y_0}+x_1P'_{y_1}+x_2P'_{y_2}}{n},\\
	S_{6} \cdot P(x_0,x_1,x_2,y_0,y_1,y_2)
	&:=
	P(x_0,x_1,x_1,y_0,y_1,y_1).
\end{align*}

\begin{lem}[\cite{BSU2001}]\label{Lem:BSULem4}
	Let $C$ be a linear code of length~$n$ over $\FF_{3}$.
	If~$C$ is colorwise $t$-homogeneous 
	with no non-zero words of Hamming weight less than~$t$,
	then for all $T$ of size $t$ we get
	\[
	\CJ_{C,T}
	=
	A_6^t \cdot \cwe_{C}.
	\]
\end{lem}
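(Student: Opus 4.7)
The plan is to prove the lemma by induction on $t$, following the pattern of Lemma~\ref{Lem:BSULem3} (established in~\cite{BSU2001}) and furnishing the $\ell=1$, $q=3$ instance of Theorem~\ref{Thm:Main2}. The base case $t=0$ is immediate: $\CJ_{C,\emptyset}$ coincides with $\cwe_{C}$ written in the $y$-variables, and $A_{6}^{0}$ is the identity operator.

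For the inductive step, I would first establish that colorwise $t$-homogeneity, combined with the hypothesis that $C$ has no nonzero codewords of Hamming weight less than $t$, implies colorwise $(t-1)$-homogeneity. This is the colored analogue of the classical fact that a $t$-design is a $(t-1)$-design: summing the $3$-colored $t$-design parameter over the extra coordinate position and the extra color recovers the $(t-1)$-design count, while the weight hypothesis rules out degenerate contributions from low-weight codewords. With colorwise $(t-1)$-homogeneity secured, the inductive hypothesis yields $\CJ_{C,T'}=A_{6}^{t-1}\cdot\cwe_{C}$ for every $T'\subseteq[n]$ with $|T'|=t-1$.

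Next I would compute $A_{6}\cdot\CJ_{C,T'}$ directly. Starting from
\[
    \CJ_{C,T'}(\{x_{a},y_{a}\}_{a\in\FF_{3}})
    =
    \sum_{\bm{u}\in C}\prod_{a\in\FF_{3}} x_{a}^{n_{a,T'}(\bm{u})}y_{a}^{n_{a,[n]\setminus T'}(\bm{u})},
\]
differentiating with respect to $y_{a}$ brings down the number of positions $i\in[n]\setminus T'$ with $u_{i}=a$, and multiplying by $x_{a}$ converts that position from the unmarked to the marked set. Collecting the terms gives
\[
    \sum_{a\in\FF_{3}} x_{a}\,\partial_{y_{a}}\CJ_{C,T'}
    =
    \sum_{i\in[n]\setminus T'}\CJ_{C,T'\cup\{i\}}.
\]
Dividing by the current $y$-degree $n-t+1$ (the normalization implicit in iterating $A_{6}$ in the Aronhold sense) and invoking colorwise $t$-homogeneity to identify every $\CJ_{C,T'\cup\{i\}}$ with a common polynomial $\CJ_{C,T}$ for any $|T|=t$, one obtains $A_{6}\cdot\CJ_{C,T'}=\CJ_{C,T}$. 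Hence $A_{6}^{t}\cdot\cwe_{C}=A_{6}\cdot A_{6}^{t-1}\cdot\cwe_{C}=A_{6}\cdot\CJ_{C,T'}=\CJ_{C,T}$, closing the induction.

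The main obstacle is the reduction from colorwise $t$-homogeneity to colorwise $(t-1)$-homogeneity, since the $3$-colored $\lambda$-parameters depend on the chosen multiset of colors and must be recombined carefully when summing over the extra coordinate position and the extra color. The hypothesis on minimum codeword weight enters precisely to prevent pathological contributions when the reference set shrinks below the support of a codeword. Once this reduction is in hand, the differentiation identity above closes the induction routinely, and the statement is the natural $\FF_{3}$-analogue of Lemma~\ref{Lem:BSULem3}.
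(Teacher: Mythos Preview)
The paper does not prove this lemma itself; it is quoted from \cite{BSU2001}, and in the paper it is the $\ell=1$, $q=3$ instance of Theorem~\ref{Thm:Main2} (whose proof is likewise omitted). The intended argument, following \cite{BRS2000,BSU2001}, is direct rather than inductive: your differentiation identity
\[
\sum_{a\in\FF_{3}} x_{a}\,\partial_{y_{a}}\CJ_{C,T'}
=\sum_{i\in[n]\setminus T'}\CJ_{C,T'\cup\{i\}}
\]
is correct, and iterating it $t$ times from $\cwe_{C}$ \emph{without} assuming any intermediate homogeneity gives
\[
A_{6}^{t}\cdot\cwe_{C}=\binom{n}{t}^{-1}\sum_{|T|=t}\CJ_{C,T},
\]
which equals $\CJ_{C,T}$ as soon as $C$ is colorwise $t$-homogeneous. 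No appeal to colorwise $(t-1)$-homogeneity is needed.

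Your inductive route inserts exactly that appeal, and the justification you sketch for it has a gap. For ordinary $t$-designs the implication $t\Rightarrow t-1$ is a one-line double count, but the \emph{multiset} definition of colored $t$-designs blocks the analogue you describe: a block whose color multiset on $T'\cup\{i\}$ equals $P'\cup\{c\}$ need not have multiset $P'$ on $T'$, since position $i$ may carry a color already in $P'$ rather than $c$. What one does obtain from $t$-homogeneity is that $\sum_{i\in T}\CJ_{C,T\setminus\{i\}}$ is constant over $t$-sets $T$; concluding that each $\CJ_{C,T'}$ is itself constant then requires injectivity of the up map from $(t-1)$-subsets to $t$-subsets, which holds only for $t\le (n+1)/2$ and is not supplied by the minimum-weight hypothesis. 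So the ``main obstacle'' you identify is real, your proposed resolution does not close it, and the cleanest fix is simply to drop the detour and run the direct averaging computation above.
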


\subsubsection{Length $12$}

\begin{ex}[length $12$]
	Let $C_{12}^{\text{III}}$ be the first ternary self-dual code of length $12$ in \cite{HM}. 
	\begin{align*}
		f[12]&=2u^{12}+2u^{11}v+3u^{10}v^2+4u^3v^9+\cdots.
	\end{align*}
	
	In this case, it holds the following lemmas.
	
	\begin{lem}[\cite{BSU2001}] \label{lem:M12}
		A basis of $M^{\widehat{G_{\rm{III}}}}_{3,9}$ is obtained by applying $R(f,\widehat{G_{\rm{III}}})$ with~$f$ running over the monomials
		\[
		x_0^3y_0^9,\;x_0^3y_0^3y_1^6,\;x_0^3y_0^3y_1^3y_2^2,\;x_0^2x_1y_0^4y_1^5.
		\]
	\end{lem}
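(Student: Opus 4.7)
The plan is to combine a dimension count from the Molien series with an explicit linear independence check. From the expansion $f[12] = 2u^{12}+2u^{11}v+3u^{10}v^2+4u^3v^9+\cdots$ displayed just above the lemma, the coefficient of $u^3v^9$ is $4$, so $\dim M^{\widehat{G_{\rm{III}}}}_{3,9}=4$. It therefore suffices to produce four linearly independent $\widehat{G_{\rm{III}}}$-invariants of bi-degree $(3,9)$: any such four will automatically span and hence form a basis.

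For any homogeneous $h\in\CC[x_0,x_1,x_2,y_0,y_1,y_2]$ of bi-degree $(i,j)$, the output $R(h,\widehat{G_{\rm{III}}})$ is by construction $\widehat{G_{\rm{III}}}$-invariant, and because each $(g,g)\in\widehat{G_{\rm{III}}}$ acts block-diagonally on the $x$- and $y$-variables, the Reynolds image remains homogeneous of bi-degree $(i,j)$. Applying $R(\cdot,\widehat{G_{\rm{III}}})$ to each of the four listed seed monomials (each of bi-degree $(3,9)$, reading the third one with $y$-exponents summing to $9$) therefore produces four elements of $M^{\widehat{G_{\rm{III}}}}_{3,9}$, and the proof reduces to verifying their linear independence.

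To carry out that verification, I would exploit the two diagonal generators of $G_{\rm{III}}$, namely $\mathrm{diag}(1,e^{2\pi i/3},1)$ and $\mathrm{diag}(1,1,e^{2\pi i/3})$: they act on a monomial $x_0^{a_0}x_1^{a_1}x_2^{a_2}y_0^{b_0}y_1^{b_1}y_2^{b_2}$ by the scalar $e^{2\pi i(a_1+b_1)/3}$ and $e^{2\pi i(a_2+b_2)/3}$ respectively, so any invariant is supported only on monomials satisfying the two congruences $a_j+b_j\equiv 0\pmod{3}$, $j=1,2$. The four seed monomials lie in four distinct orbits under the combined action of these diagonals together with the coordinate permutations induced by the Fourier-type generator; consequently, in the support of each $R(f_i,\widehat{G_{\rm{III}}})$ one can single out a distinguishing (e.g.\ lexicographically leading) monomial that does \emph{not} appear in the supports of the other three Reynolds images. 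This yields a triangular pattern in the $4\times N$ coefficient matrix and establishes linear independence.

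The main obstacle is purely computational: the Reynolds sum runs over all $2592$ elements of $\widehat{G_{\rm{III}}}$, so direct expansion by hand is infeasible, and one must also confirm that none of the four Reynolds images collapses to zero. As announced in the introduction, this is carried out in Magma; once each $R(f_i,\widehat{G_{\rm{III}}})$ is expanded into a monomial basis, a rank computation on the resulting coefficient matrix confirms that the rank is $4$, and combined with the Molien dimension count this completes the proof.
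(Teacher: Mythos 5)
This lemma is imported verbatim from \cite{BSU2001}; the paper gives no proof of its own, so there is nothing internal to compare against. Your reconstruction --- read off $\dim M^{\widehat{G_{\rm{III}}}}_{3,9}=4$ from the coefficient of $u^3v^9$ in the displayed Molien series, note that the Reynolds operator sends any bidegree-$(3,9)$ monomial into $M^{\widehat{G_{\rm{III}}}}_{3,9}$, and then verify that the four images are linearly independent by machine --- is exactly the standard argument and the one the surrounding text is set up to support, so I consider it correct modulo the computation, which the paper itself delegates to Magma. Two remarks. First, you were right to read the third seed monomial as $x_0^3y_0^3y_1^3y_2^3$: not only must the $y$-degree be $9$, but your own congruence test $a_2+b_2\equiv 0\pmod 3$ (coming from the generator $\mathrm{diag}(1,1,e^{2\pi i/3})$) fails for $y_2^2$ and holds for $y_2^3$, so the printed exponent is a typo. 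Second, the intermediate heuristic about ``distinct orbits'' yielding a distinguishing leading monomial in each Reynolds image is not actually justified: the Fourier-type generator is not a monomial matrix, so the support of $R(f_i,\widehat{G_{\rm{III}}})$ is far larger than any permutation orbit of $f_i$, and disjointness of supports cannot be inferred this way. This does not damage the proof, because you ultimately fall back on a rank computation for the $4\times N$ coefficient matrix, but the orbit argument should be dropped rather than presented as the mechanism behind independence.
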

	
	
	\begin{lem}[\cite{BSU2001}] \label{lem:S12}
		For $\ell=1,2,3$ we have
		\[
			\dim (S_{6}\cdot M_{\ell,12-\ell}^{\widehat{G_{\rm{III}}}})
			=
			\dim(M_{\ell,12-\ell}^{\widehat{G_{\rm{III}}}}).
		\]
	\end{lem}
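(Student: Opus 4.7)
The plan is to establish the dimension equality by showing that $S_{6}$ acts injectively on each $M_{\ell,12-\ell}^{\widehat{G_{\rm{III}}}}$ for $\ell=1,2,3$; since the source spaces are finite-dimensional, injectivity is equivalent to the claimed equality of dimensions.

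First, I would pin down the source dimensions. Because $\widehat{G_{\rm{III}}}$ consists of block matrices of the form $\mathrm{diag}(g,g)$, the bivariate Molien series is symmetric under $u\leftrightarrow v$. Reading off coefficients from $f[12]$ displayed in the preceding example yields $\dim M_{1,11}^{\widehat{G_{\rm{III}}}}=2$ and $\dim M_{2,10}^{\widehat{G_{\rm{III}}}}=3$, while the case $\ell=3$ gives dimension $4$ by Lemma~\ref{lem:M12}. Then, in exact analogy with Lemma~\ref{lem:M12}, I would apply the Reynolds operator $R(\,\cdot\,,\widehat{G_{\rm{III}}})$ to well-chosen monomials of the appropriate bidegrees to produce explicit spanning sets: two monomials for $\ell=1$ and three for $\ell=2$, with distinct exponent multisets so that the Reynolds images do not obviously collapse. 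Linear independence of these images is verified to furnish bases.

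The final step is to apply the substitution $x_{2}\mapsto x_{1}$, $y_{2}\mapsto y_{1}$ to each basis element and confirm that the resulting polynomials in $\CC[x_{0},x_{1},y_{0},y_{1}]$ remain linearly independent. This amounts to checking that the coefficient matrix, expressed in the monomial basis of the image space after specialization, has rank equal to the source dimension $2$, $3$, or $4$ respectively. The principal obstacle is computational rather than conceptual: with $|G_{\rm{III}}|=2592$, the Reynolds sums are infeasible by hand, and the verification is performed by computer algebra (Magma, as acknowledged in the introduction). A mild nuisance is the choice of starting monomials — a poor choice might yield a dependent family, in which case one substitutes alternatives and re-tests — but this does not affect the outcome, as a spanning set of the right cardinality is guaranteed to exist by the dimension count.
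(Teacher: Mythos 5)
Your proposal matches the paper's approach: the paper cites this lemma from Bonnecaze--Sol\'e--Udaya and, for the analogous Type~IV statement (Lemma~\ref{lem:SS}), proves it in exactly the way you describe --- apply the specialization to explicit Reynolds-operator bases of the invariant spaces and verify that the images remain linearly independent, with the dimensions read off from the bivariate Molien series. The argument is correct; the only work is the (computer-assisted) linear-independence check, which you have identified.
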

	
	Combining the preceding lemmas and the bivariate Molien series, 
	we obtain the following Theorem.
	
	\begin{thm}[\cite{BSU2001}]
		The codewords of fixed composition in the ternary Golay hold $3$-colored $3$-designs.
	\end{thm}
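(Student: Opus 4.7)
The plan is to apply Theorem~\ref{Thm:JacToDesign} with $\ell = 1$, which reduces the claim to showing that $\CJ_{C,T}$ is independent of the choice of $T\in\binom{[12]}{3}$, where $C = C_{12}^{\III}$. The strategy is to play the ordinary Jacobi polynomial (which only distinguishes zero from nonzero) against the complete Jacobi polynomial (which distinguishes the two nonzero values), using the injectivity of the specialization operator $S_{6}$ on the relevant invariant space.

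Concretely, I fix $T, T' \in \binom{[12]}{3}$ and set $D := \CJ_{C,T} - \CJ_{C,T'}$. Because $C$ is Type~$\III$, the complete Jacobi polynomial $\CJ_{C,T}$---a special case of a split complete weight enumerator attached to the two sets $T$ and $[12]\setminus T$---is invariant under $\widehat{G_{\III}}$, so $D\in M^{\widehat{G_{\III}}}_{3,9}$. A direct inspection of Definitions~\ref{Def:JacOne} and~\ref{Def:CompleteJac} shows
\[
    S_{6}\cdot \CJ_{C,T}(x_{0},x_{1},x_{2},y_{0},y_{1},y_{2}) = \CJ_{C,T}(x_{0},x_{1},x_{1},y_{0},y_{1},y_{1}) = J_{C,T}(x_{0},x_{1},y_{0},y_{1}),
\]
so $S_{6}\cdot D = J_{C,T} - J_{C,T'}$.

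The ternary Golay code is well known to support $5$-designs, hence in particular is $3$-homogeneous, and its minimum weight $6$ exceeds $3$. Lemma~\ref{Lem:BSULem3} therefore yields $J_{C,T} = A_{4}^{3}\cdot W_{C}$, which depends only on $|T|$; consequently $S_{6}\cdot D = 0$. By Lemma~\ref{lem:S12} (the case $\ell = 3$), $S_{6}$ is injective on $M^{\widehat{G_{\III}}}_{3,9}$, so $D = 0$ and $\CJ_{C,T} = \CJ_{C,T'}$.

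The main obstacle is not in this short argument but in the background machinery already packaged by the preceding lemmas: computing the bivariate Molien series to read off $\dim M^{\widehat{G_{\III}}}_{3,9} = 4$, producing four linearly independent invariants via the Reynolds operator (Lemma~\ref{lem:M12}), and checking that $S_{6}$ preserves that dimension (Lemma~\ref{lem:S12}). The conceptual content is that the injectivity of $S_{6}$ is precisely what upgrades the Golay code's classical $3$-homogeneity---which merges the two nonzero colors $1$ and $2$---to $3$-colored $3$-homogeneity.
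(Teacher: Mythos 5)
Your argument is correct and is exactly the one the paper intends: the paper's ``proof'' is the single sentence ``Combining the preceding lemmas and the bivariate Molien series, we obtain the following Theorem,'' and the fully written-out analogue (Theorem~\ref{Thm:Main3} for the length-$6$ Type~$\IV$ code) uses precisely your mechanism --- $\CJ_{C,T}$ lies in the invariant space, the specialization $S_{6}$ sends it to $J_{C,T}=A_{4}^{3}\cdot W_{C}$ which is independent of $T$, and Lemma~\ref{lem:S12} makes $S_{6}$ injective on $M^{\widehat{G_{\rm III}}}_{3,9}$. Your ``difference lies in the kernel'' phrasing is just a cleaner packaging of the paper's ``solve the linear system with indeterminate coefficients on the basis of Lemma~\ref{lem:M12}.''
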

	Since the codewords of fixed composition in $C_{12}^{\text{III}}$ holds $3$-colored $3$-design, we assume that
	$|T|=1,2,3$.
	Then
	\begin{align*}
		\CJ_{C_{12}^{\text{III}},1}&=x_0(y_0^{11}+11y_0^5y_1^6+110y_0^5y_1^3y_2^3+11y_0^5y_2^6+55y_0^2y_1^6y_2^3+55y_0^2y_1^3y_2^6)\\
		&~+x_1(y_1^{11}+11y_0^6y_1^5+55y_0^6y_1^2y_2^3+110y_0^3y_1^5y_2^3+55y_0^3y_1^2y_2^6+11y_1^5y_2^6)\\
		&~+x_2(y_2^{11}+55y_0^6y_1^3y_2^2+11y_0^6y_2^5+55y_0^3y_1^6y_2^2+110y_0^3y_1^3y_2^5+11y_1^6y_2^5),\\
		\CJ_{C_{12}^{\text{III}},2}&=x_0^2(y_0^{10}+5y_0^4y_1^6+50y_0^4y_1^3y_2^3+5y_0^4y_2^6+10y_0y_1^6y_2^3+10y_0y_1^3y_2^6)\\
		&~+x_0x_1(12y_0^5y_1^5+60y_0^5y_1^2y_2^3+60y_0^2y_1^5y_2^3+30y_0^2y_1^2y_2^6)\\
		&~+x_0x_2(60y_0^5y_1^3y_2^2+12y_0^5y_2^5+30y_0^2y_1^6y_2^2+60y_0^2y_1^3y_2^5)\\
		&~+x_1^2(5y_0^6y_1^4+10y_0^6y_1y_2^3+50y_0^3y_1^4y_2^3+10y_0^3y_1y_2^6+5y_1^4y_2^6+y_1^{10})\\
		&~+x_1x_2(30y_0^6y_1^2y_2^2+60y_0^3y_1^5y_2^2+60y_0^3y_1^2y_2^5+12y_1^5y_2^5)+x_2^2(10y_0^6y_1^3y_2\\
		&~+5y_0^6y_2^4+10y_0^3y_1^6y_2+50y_0^3y_1^3y_2^4+5y_1^6y_2^4+y_2^{10}),\\
		\CJ_{C_{12}^{\text{III}},3}&=x_0^3(y_0^9+2y_0^3y_1^6+20y_0^3y_1^3y_2^3+2y_0^3y_2^6+y_1^6y_2^3+y_1^3y_2^6)+x_0^2x_1(9y_0^4y_1^5\\
		&~+45y_0^4y_1^2y_2^3+18y_0y_1^5y_2^3+9y_0y_1^2y_2^6)+x_0^2x_2(45y_0^4y_1^3y_0^2+9y_0^4y_2^5\\
		&~+9y_0y_1^6y_2^2+18y_0y_1^3y_2^5)+x_0x_1^2(9y_0^5y_1^4+18y_0^5y_1y_2^3+45y_0^2y_1^4y_2^3\\
		&~+9y_0^2y_1y_2^6)+x_0x_2^2(18y_0^5y_1^3y_2+9y_0^5y_2^4+9y_0^2y_1^6y_2+45y_0^2y_1^3y_2^4)\\
		&~+x_0x_1x_2(54y_0^5y_1^2y_2^2+54y_0^2y_1^5y_2^2+54y_0^2y_1^2y_2^5)+x_1^3(2y_0^6y_1^3+y_1^9\\
		&~+y_0^6y_2^3+20y_0^3y_1^3y_2^3+y_0^3y_2^6+2y_1^3y_2^6)+x_1^2x_2(9y_0^6y_1y_2^2+45y_0^3y_1^4y_2^2\\
		&~+18y_0^3y_1y_2^5+9y_1^4y_2^5)+x_1x_2^2(9y_0^5y_1y_2^3+18y_0^3y_1^5y_2+45y_0^3y_1^2y_2^4\\
		&~+9y_1^5y_2^4)+x_2^3(y_0^6y_1^3+2y_0^6y_2^3+y_0^3y_1^6+y_2^9+20y_0^3y_1^3y_2^3+2y_1^6y_2^3).
	\end{align*}
	
	\begin{cor}[\cite{BSU2001}]
		There exist simple $3$-colored $3$-designs with the following parameters:
		Three designs with parameters $3$-$(12,(n(0),n(1),n(2)),220)$ where $(n(0),n(1),n(2))$ is equal to
		$(6,3,3)$ or any one of its three permutations.
		Three designs with parameters $3$-$(12,(n(0),n(1),n(2)),22)$ where $(n(0),n(1),n(2))$ is equal to
		$(6,6,0)$ or any one of its three permutations.	
	\end{cor}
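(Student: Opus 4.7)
The plan is to read off the claimed block counts $220$ and $22$ from the explicit polynomial $\CJ_{C_{12}^{\text{III}},3}$ displayed above. The preceding theorem states that $C_{12}^{\text{III}}$ is colorwise $3$-homogeneous, so Theorem~\ref{Thm:JacToDesign} guarantees that $\CJ_{C_{12}^{\text{III}},T}$ is independent of the choice of $3$-subset $T\subseteq [12]$. Consequently, for any monomial $x_0^{j_0}x_1^{j_1}x_2^{j_2}y_0^{k_0}y_1^{k_1}y_2^{k_2}$ with $j_0+j_1+j_2=3$, its coefficient equals the common value $\lambda$ attached to the color multiset $(j_0,j_1,j_2)$ in the $3$-colored $3$-design formed by the codewords of composition $(j_0+k_0,j_1+k_1,j_2+k_2)$.

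Given this, for any composition $(n(0),n(1),n(2))$ of $12$ the block count $|\mathcal{B}(C_{\bm{s}})|$ is simply the number of codewords of that composition, equivalently the sum of the coefficients in $\CJ_{C_{12}^{\text{III}},3}$ of all monomials whose combined $(x_a,y_a)$-degrees are $(n(0),n(1),n(2))$. Carrying out this sum for the composition $(6,3,3)$ collects contributions $20,45,45,18,18,54,1,1,9,9$, totalling $220$; for the composition $(6,6,0)$ the contributions $2,9,9,2$ total $22$. The remaining two permutations of each composition are handled identically by relabeling the elements of $\FF_3$, giving three designs with the same block count in each case.

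Simplicity is automatic: a codeword is determined by its colored support, so $\mathcal{B}(C_{\bm{s}})$ is a genuine set rather than a multiset. I do not anticipate a real obstacle at this step, as all the substantive work is already encoded in Lemmas~\ref{lem:M12} and~\ref{lem:S12}: the four-element basis of $M^{\widehat{G_{\rm{III}}}}_{3,9}$ supplied by the Reynolds operator, together with the injectivity of the specialization $S_6$ on $M^{\widehat{G_{\rm{III}}}}_{\ell,12-\ell}$ for $\ell=1,2,3$, is exactly what pins down $\CJ_{C_{12}^{\text{III}},3}$ from the complete weight enumerator and lower-order data. What remains at the level of the corollary is purely coefficient bookkeeping in the displayed polynomial.
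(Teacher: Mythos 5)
Your proposal is correct and follows essentially the same route the paper (via \cite{BSU2001}) intends: the preceding theorem gives that the fixed-composition codeword sets are $3$-colored $3$-designs, and the block counts $220$ and $22$ are then just the numbers of codewords of compositions $(6,3,3)$ and $(6,6,0)$, read off either from the complete weight enumerator or, as you do, by summing the coefficients of the monomials of matching combined degree in the displayed $\CJ_{C_{12}^{\text{III}},3}$ (your tallies $20+45+45+18+18+54+1+1+9+9=220$ and $2+9+9+2=22$ check out). Your observations that permutations of the composition are handled by relabeling $\FF_3$ and that simplicity holds because a codeword is determined by its colored block are both right and complete the argument.
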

	
	The space of Jacobi polynomials $\CJ_{C_{12}^{\text{III}},T}$ with $|T|=4$ may be generated by the two polynomials
	\begin{align*}
		C^1_{C_{12}^{\text{III}},4}&=x_0^4(y_0^8+y_0^2y_1^6+6y_0^2y_1^3y_2^3+y_0^2y_2^6)+x_0^3x_1(4y_0^3y_1^5+28y_0^3y_1^2y_2^3\\
		&~+4y_1^5y_2^3)+x_0^3x_2(28y_0^3y_1^3y_2^2+4y_0^3y_2^5+4y_1^3y_2^5)+x_0^2x_1^2(12y_0^4y_1^4\\
		&~+18y_0^4y_1y_2^3+18y_0y_1^4y_2^3+6y_0y_1y_2^6)+x_0^2x_1x_2(60y_0^4y_1^2y_2^2+24y_0y_1^5y_2^2\\
		&~+24y_0y_1^2y_2^5)+x_0^2x_2^2(18y_0^4y_1^3y_2+12y_0^4y_2^4+6y_0y_1^6y_2+18y_0y_1^3y_2^4)\\
		&~+x_0x_1^3(4y_0^5y_1^3+4y_0^5y_2^3+28y_0^2y_1^3y_2^3)+x_0x_1^2x_2(24y_0^5y_1y_2^2+60y_0^2y_1^4y_2^2\\
		&~+24y_0^2y_1y_2^5)+x_0x_1x_2^2(24y_0^5y_1^2y_2+24y_0^2y_1^5y_2+60y_0^2y_1^2y_2^4)\\
		&~+x_0x_2^3(4y_0^5y_1^3+4y_0^5y_2^3+28y_0^2y_1^3y_2^3)+x_1^4(y_0^6y_1^2+6y_0^3y_1^2y_2^3+y_1^8\\
		&~+y_1^2y_2^6)+x_1^3x_2(28y_0^3y_1^3y_2^2+4y_0^3y_2^5+4y_1^3y_2^5)+x_1^2x_2^2(6y_0^6y_1y_2\\
		&~+18y_0^3y_1^4y_2+18y_0^3y_1y_2^4+12y_1^4y_2^4)+x_1x_2^3(4y_0^3y_1^5+28y_0^3y_1^2y_2^3\\
		&~+4y_1^5y_2^3)+x_2^4(y_0^6y_2^2+6y_0^3y_1^3y_2^2+y_1^6y_2^2+y_2^8),\\
		C^2_{C_{12}^{\text{III}},4}&=x_0^4(y_0^8+8y_0^2y_1^3y_2^3)+x_0^3x_1(8y_0^3y_1^5+24y_0^3y_1^2y_2^3+4y_1^2y_2^6)\\
		&~+x_0^3x_2(24y_0^3y_1^3y_2^2+8y_0^3y_2^5+4y_1^6y_2^2)+x_0^2x_1^2(6y_0^4y_1^4+24y_0^4y_1y_2^3\\
		&~+24y_0y_1^4y_2^3)+x_0^2x_1x_2(60y_0^4y_1^2y_2^2+24y_0y_1^5y_2^2+24y_0y_1^2y_2^5)\\
		&~+x_0^2x_2^2(6y_0^4y_2^4+24y_0^4y_1^3y_2+24y_0y_1^3y_2^4)+x_0x_1^3(8y_0^5y_1^3+24y_0^5y_2^3\\
		&~+4y_0^2y_2^6)+x_0x_1^2x_2(24y_0^5y_1y_2^2+60y_0^2y_1^4y_2^2+24y_0^2y_1y_2^5)\\
		&~+x_0x_1x_2^2(24y_0^5y_1^2y_2+24y_0^2y_1^5y_2+60y_0^2y_1^2y_2^4)\\
		&~+x_0x_2^3(8y_0^5y_2^3+4y_0^2y_1^6+24y_0^2y_1^3y_2^3)+x_1^4(8y_0^3y_1^2y_2^3+y_1^8)\\
		&~+x_1^3x_2(4y_0^6y_2^2+24y_0^3y_1^3y_2^2+8y_1^3y_2^5)+x_1^2x_2^2(24y_0^3y_1^4y_2+24y_0^3y_1y_2^4\\
		&~+6y_1^4y_2^4)+x_1x_2^3(4y_0^6y_1^2+24y_0^3y_1^2y_2^3+8y_1^5y_2^3)+x_2^4(8y_0^3y_1^3y_2^2+y_2^8).
	\end{align*}
	Combining these two equations we obtain $4$-designs with parameters
	\[
		4\text{-}(12,(6,6,0),(\lambda_{1}^{1}(P),\lambda_{2}^{1}(P)))
		\text{ and }
		4\text{-}(12,(6,3,3),(\lambda_{1}^{2}(P),\lambda_{2}^{2}(P)),
	\]
	where $\lambda(P)$'s are shown in Table~\ref{Tab:Type_III_4_design}.
	By the coefficient of the term $y_0^{n(0)}y_1^{n(1)}y_2^{n(2)}$ in the complete weight enumerator of the code, we obtain an upper (resp. lower) bound of $D_{\lambda_{\max}(P)}(12,(n(0),n(1),n(2)),4)$ (resp. $C_{\lambda_{\max}(P)}(12,(n(0),n(1),n(2)),4)$).
	\begin{align*}
		D_{\lambda_{\max}^{1}(P)}(12,(6,6,0),4)
		&\leq 22 \leq 
		C_{\lambda_{\min}^{1}(P)}(12,(6,6,0),4),\\
		D_{\lambda_{\max}^{2}(P)}(12,(6,3,3),4)
		&\leq 220 \leq 
		C_{\lambda_{\min}^{2}(P)}(12,(6,3,3),4).
	\end{align*}
	The $\lambda_{\max}(P)$'s (resp. $\lambda_{\min}(P)$'s) are 
	shown in Table~\ref{Tab:Type_III_4_design}.
	\begin{table}[h]
		\centering
		\caption{$\lambda$'s in $3$-colored $4$-designs}
		\label{Tab:Type_III_4_design}
		\begin{tabular}{|c||c|c|c|c|c|c|c|c|}
			\hline
			$P$ & $0000$ & $0001$ & $0002$ & $0011$ & $0012$ & $0022$ & $0111$ & $0112$ \\
			\hline
			$\lambda_{1}^{1}(P)$ & 1 & 4 & 0 & 12 & 0 & 0 & 4 & 0\\
			\hline
			$\lambda_{2}^{1}(P)$ & 0 & 8 & 0 & 6 & 0 & 0 & 8 & 0\\
			\hline
			$\lambda_{\max}^{1}(P)$ & 1 & 8 & 0 & 12 & 0 & 0 & 8 & 0\\
			\hline
			$\lambda_{\min}^{1}(P)$ & 0 & 4 & 0 & 6 & 0 & 0 & 4 & 0\\
			\hline
			$\lambda_{1}^{2}(P)$ & 6 & 28 & 28 & 18 & 60 & 18 & 4 & 24\\
			\hline
			$\lambda_{2}^{2}(P)$ & 8 & 24 & 24 & 24 & 60 & 0 & 24 & 24\\
			\hline
			$\lambda_{\max}^{2}(P)$ & 8 & 28 & 28 & 24 & 60 & 18 & 24 & 24\\
			\hline
			$\lambda_{\min}^{2}(P)$ & 6 & 24 & 24 & 18 & 60 & 0 & 4 & 24\\
			\hline
			$P$ & 0122 & 0222 & 1111 & 1112 & 1122 & 1222 & 2222 & - \\
			\hline
			$\lambda_{1}^{1}(P)$ & 0 & 0 & 1 & 0 & 0 & 0 & 0 & -\\
			\hline
			$\lambda_{2}^{1}(P)$ & 0 & 0 & 0 & 0 & 0 & 0 & 0 & -\\
			\hline
			$\lambda_{\max}^{1}(P)$ & 0 & 0 & 1 & 0 & 0 & 0 & 0 & -\\
			\hline
			$\lambda_{\min}^{1}(P)$ & 0 & 0 & 0 & 0 & 0 & 0 & 0 & -\\
			\hline
			$\lambda_{1}^{2}(P)$ & 24 & 4 & 0 & 0 & 6 & 0 & 0 & -\\
			\hline
			$\lambda_{2}^{2}(P)$ & 24 & 0 & 0 & 4 & 0 & 4 & 0 & -\\
			\hline
			$\lambda_{\max}^{2}(P)$ & 24 & 4 & 0 & 4 & 6 & 4 & 0 & -\\
			\hline
			$\lambda_{\min}^{2}(P)$ & 24 & 0 & 0 & 0 & 0 & 0 & 0 & -\\
			\hline
		\end{tabular}
	\end{table}

	Similarly, we can obtain an upper (resp. lower) bound of $D$ (resp. $C$) 
	$3$-colored $4$-designs where $(n(0),n(1),n(2))$ is equal to anyone of
	the four choices: 
	$(6,0,6)$, $(0,6,6)$, $(3,3,6)$, $(3,6,3)$.
\end{ex}

\subsection{Type IV codes}
The MacWilliams transform and some congruence conditions yield that 
the complete weight enumerator of a Type~IV code remains invariant under the action of 
group $G_{\text{IV}}$ of order $576$ which is generated by the following four matrices:
\begin{align*}
	\frac{1}{2}
	&\begin{bmatrix}
		1&1&1&1\\
		1&1&-1&-1\\
		1&-1&1&-1\\
		1&-1&-1&1
	\end{bmatrix},\;
	\begin{bmatrix}
		1&0&0&0\\
		0&-1&0&0\\
		0&0&-1&0\\
		0&0&0&-1
	\end{bmatrix},\;\\
	&\begin{bmatrix}
		0&1&0&0\\
		1&0&0&0\\
		0&0&0&1\\
		0&0&1&0
	\end{bmatrix},\;
	\begin{bmatrix}
		1&0&0&0\\
		0&0&1&0\\
		0&0&0&1\\
		0&1&0&0
	\end{bmatrix}.
\end{align*}
It is easy to show that 
a split complete weight enumerator of a Type IV code 
is an invariant of $\widehat{G_{\rm{IV}}}$. 

Let the elements of $\FF_4$ be $0,1,s,s^2$.
If $P$ is a polynomial of total degree~$n$ in $8$ variables 
$x_0,x_1,x_s,x_{s^2},y_0,y_1,y_s,y_{s^2}$.
Now define a polarization operator $A_{8}$ and specialization operator 
$\widetilde{S}$ as follows:
\begin{align*}
	A_8 \cdot P
	&:=
	\frac{x_0P'_{y_0}+x_1P'_{y_1}+x_sP'_{y_s}+x_{s^2}P'_{y_{s^2}}}{n},\\
	{S_{8}} \cdot P
	(x_0,x_1,x_s,x_{s^2},y_0,y_1,y_s,y_{s^2})
	&:=
	P(x_0,x_1,x_1,x_1,y_0,y_1,y_1,y_1).
\end{align*}

\begin{df}
	An $\FF_{4}$-linear code of length~$n$ is said to be 
	\emph{colorwise $t$-homogeneous} if the codewords of 
	every given composition hold a $4$-colored $t$-design.
\end{df}

Now we have the following $\FF_{4}$-code analogues of 
Lemma~\ref{Lem:BSULem3} and Lemma~\ref{Lem:BSULem4}.

\begin{lem}\label{Lem:BSULem3IV}
	Let $C$ be a linear code of length~$n$ over $\FF_{4}$.
	If~$C$ is $t$-homogeneous 
	with no non-zero words of Hamming weight less than~$t$,
	then for all $T$ of size $t$ we get
	\[
		J_{C,T}=A_4^t \cdot W_C.
	\]
\end{lem}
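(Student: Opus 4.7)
The plan is to transport the $\FF_3$ proof of Lemma~\ref{Lem:BSULem3} (due to \cite{BSU2001}) to the $\FF_4$ setting: none of the relevant objects---the Hamming weight enumerator $W_C$, the Jacobi polynomial $J_{C,T}$, or the polarization operator $A_4$---sees anything of the field beyond the distinction between zero and non-zero entries, so the proof structure transfers verbatim. I will induct on $t$. For $t=0$, identifying the Jacobi variables $(w,z,x,y)$ with $(x_0,x_1,y_0,y_1)$ in the natural way makes $J_{C,\emptyset}$ literally equal to $W_C(y_0,y_1)$, and $A_4^0$ is the identity.

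For the inductive step, fix $T'$ with $|T'|=t-1$ and unfold the action of $x_0\partial_{y_0}+x_1\partial_{y_1}$ on $J_{C,T'}$ termwise. A direct bookkeeping yields the combinatorial identity
\[
	(x_0\partial_{y_0}+x_1\partial_{y_1})\,J_{C,T'} \;=\; \sum_{j\in[n]\setminus T'}J_{C,\,T'\cup\{j\}},
\]
where the $y_0$-derivative accounts for enlargements of $T'$ by a point $j$ with $u_j=0$ and the $y_1$-derivative accounts for those with $u_j\neq 0$. The hypothesis that $C$ is $t$-homogeneous, combined with the absence of non-zero codewords of weight less than~$t$ (which makes the $t$-design structure on each weight class meaningful), forces $J_{C,S}$ for $|S|\leq t$ to depend only on $|S|$. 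Thus every summand on the right is the same polynomial $J_{C,T}$ for any chosen $T$ of size $t$, and the sum collapses to $(n-t+1)\,J_{C,T}$. After dividing by the normalization factor absorbed into $A_4$ at this step, I obtain $A_4\cdot J_{C,T'}=J_{C,T}$, which together with the inductive hypothesis $A_4^{t-1}\cdot W_C=J_{C,T'}$ closes the induction.

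The main technical obstacle is the normalization bookkeeping inside the definition of $A_4$: one has to clarify that the denominator $n$ appearing in $A_4\cdot P=(x_0 P'_{y_0}+x_1 P'_{y_1})/n$ is to be read as the current $y$-degree of $P$, which equals exactly $n-t+1$ at the $t$-th iteration and therefore cancels the combinatorial factor from the sum on the right. Once this accounting is settled the argument is routine; crucially, no part of it uses $\FF_4$ rather than $\FF_3$, since the oblivious dependence of $W_C$, $J_{C,T}$, and $A_4$ on the actual non-zero field values is precisely what makes the $\FF_3$ argument of \cite{BSU2001} copy over.
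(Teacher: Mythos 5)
Your proof is correct and is exactly the argument the paper implicitly invokes: the lemma is stated without proof as the $\FF_{4}$ analogue of Lemma~\ref{Lem:BSULem3}, deferring to the inductive polarization identity $(x_0\partial_{y_0}+x_1\partial_{y_1})J_{C,T'}=\sum_{j\notin T'}J_{C,T'\cup\{j\}}$ of Bonnecaze--Sol\'e--Udaya~\cite{BSU2001}, which is what you have written out, together with the (field-independent) fact that $t$-homogeneity plus the minimum-weight hypothesis makes $J_{C,S}$ depend only on $|S|$ for $|S|\leq t$. Your insistence that the denominator in $A_4$ be read as the current $y$-degree $n-t+1$ of the operand, rather than the fixed total degree $n$, is the correct reading of the (loosely stated) definition and is genuinely needed for the identity to hold once $t\geq 2$.
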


\begin{lem}\label{Lem:BSULem4IV}
	Let $C$ be a linear code of length~$n$ over $\FF_{4}$.
	If~$C$ is colorwise $t$-homogeneous 
	with no non-zero words of Hamming weight less than~$t$,
	then for all $T$ of size $t$ we get
	\[
	\CJ_{C,T}=A_8^t \cdot \cwe_{C}.
	\]
\end{lem}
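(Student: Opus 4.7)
The plan is to recognize the claim as the $\ell=1$, $q=4$ specialization of Theorem~\ref{Thm:Main2}. First I would verify the dictionary between the two statements: with $\ell=1$, $X_1=[n]$, and $T_1=T$, the split complete Jacobi polynomial reduces to the ordinary complete Jacobi polynomial ($\SCJ_{C,[n](T)} = \CJ_{C,T}$), the split complete weight enumerator reduces to the complete weight enumerator ($\scwe_{C,[n]} = \cwe_C$), and the polarization operator $A_{2q\ell,1}$ at $q=4$ coincides with the $A_8$ defined immediately before the lemma, since each is $\frac{1}{n}\sum_{a\in\FF_4} x_a\, \partial/\partial y_a$. The $\FF_4$-notion of colorwise $t$-homogeneous (codewords of fixed composition forming a $4$-colored $t$-design) is precisely the $\ell=1$ case of the generalized colorwise $t$-homogeneous hypothesis of Theorem~\ref{Thm:Main2}. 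Under these identifications, Theorem~\ref{Thm:Main2} delivers $\CJ_{C,T} = A_8^t \cdot \cwe_C$ at once.

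For a more self-contained argument in the spirit of Lemma~\ref{Lem:BSULem4}, I would establish the base case $t=1$ by combining Theorem~\ref{Thm:Main1} (with $\ell=1$, $q=4$) with the elementary identity $\partial \cwe_C/\partial y_a = \sum_{i\in[n]} \cwe_{C+i_a}$, which follows by differentiating the defining monomials of $\cwe_C$ term by term and reorganizing the sum over codewords as a sum over coordinates $i$ satisfying $u_i = a$. Together these give
\[
A_8 \cdot \cwe_C = \frac{1}{n}\sum_{i\in[n]} \sum_{a\in\FF_4} x_a \cwe_{C+i_a} = \frac{1}{n}\sum_{i\in[n]} \CJ_{C,\{i\}},
\]
and the colorwise $1$-homogeneity hypothesis (via Theorem~\ref{Thm:JacToDesign}) forces each summand on the right to coincide, so the average equals any single $\CJ_{C,\{i\}}$. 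The inductive step for $t>1$ repeats the same computation with $\cwe_C$ replaced by $A_8^{t-1} \cdot \cwe_C = \CJ_{C,T'}$ for some $|T'|=t-1$, and uses colorwise $t$-homogeneity to collapse the extra average over the newly adjoined coordinate.

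The main technical obstacle along this direct route is the bookkeeping required during the induction: one must check that each application of $A_8$ faithfully corresponds to enlarging $T$ by one coordinate, and that the hypothesis \emph{no non-zero codewords of Hamming weight less than $t$} suffices at every stage to suppress boundary contributions arising from codewords whose support is exhausted by the chosen coordinates. Routing through Theorem~\ref{Thm:Main2} bypasses this bookkeeping entirely, at the cost of inheriting whatever inductive argument that theorem demands.
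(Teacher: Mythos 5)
The paper offers no proof of Lemma~\ref{Lem:BSULem4IV} at all: it is presented as the $\FF_4$-analogue of Lemma~\ref{Lem:BSULem4}, which is itself quoted from~\cite{BSU2001} without proof. So your proposal supplies more than the paper does, and your primary route --- specializing Theorem~\ref{Thm:Main2} to $\ell=1$, $q=4$, with the dictionary $\SCJ_{C,[n](T)}=\CJ_{C,T}$, $\scwe_{C,[n]}=\cwe_C$, $A_{2q\ell,1}=A_8$, and ``generalized colorwise $t$-homogeneous'' reducing to ``colorwise $t$-homogeneous'' --- is exactly the reduction the paper's architecture invites; it is correct, with the caveat that Theorem~\ref{Thm:Main2} is itself left to the reader, so you are trading one unproved statement for another. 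Your second, self-contained route is also essentially right (the identities $\partial\cwe_C/\partial y_a=\sum_i\cwe_{C+i_a}$ and $\sum_a x_a\cwe_{C+i_a}=\CJ_{C,\{i\}}$ check out, and Theorem~\ref{Thm:JacToDesign} with $\ell=1$ gives the $T$-independence), but two of the ``bookkeeping'' items you defer deserve to be named explicitly. First, the normalization: applying $\sum_a x_a\,\partial/\partial y_a$ to $\CJ_{C,T'}$ with $|T'|=t-1$ produces $\sum_{j\notin T'}\CJ_{C,T'\cup\{j\}}$, a sum of $n-t+1$ terms, so the denominator at the $t$-th application must be $n-t+1$ (the current $y$-degree), not $n$; taken literally, the paper's definition of $A_8$ with fixed denominator $n$ would make $A_8^t\cdot\cwe_C=\frac{(n-1)\cdots(n-t+1)}{n^{t-1}}\CJ_{C,T}$. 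The paper's own computations (e.g.\ the coefficient $1$ of $x_0^2y_0^{10}$ in $\CJ_{C_{12}^{\mathrm{III}},2}$) confirm that the decreasing denominator is intended, so this is a defect of the stated definition rather than of your argument, but your induction must adopt the correct convention. Second, your inductive step invokes $A_8^{t-1}\cdot\cwe_C=\CJ_{C,T'}$ for a single $T'$ of size $t-1$, which requires colorwise $(t-1)$-homogeneity; you should record that a colored $t$-design is a colored $t'$-design for $t'\le t$ (sum the $\lambda$'s over the colors of the discarded points), so the hypothesis propagates down. With those two points made explicit, the direct argument closes, and the weight hypothesis plays no role beyond keeping the homogeneity assumption consistent.
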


\subsubsection{Length $4$}

\begin{ex}
	Let $C_{4}^{\text{IV}}$ be the Hermitian self-dual code over $\FF_4$ of length $4$ in \cite{HM}. 
	\begin{align*}
		f[4]&=u^4+u^3v+2u^2+uv^3+v^2.
	\end{align*}
	Since the codewords of fixed composition in $C_{4}^{\text{IV}}$ holds $4$-colored $1$-design, we assume that
	$|T|=1$.
	Then,
	\begin{align*}
		\CJ_{C_{4}^{\text{IV}},1}&=x_0(y_0^3+y_0y_1^2+y_0y_s^2+y_0y_{s^2}^2)+x_1(y_0^2y_1+y_1^3+y_1y_s^2+y_1y_{s^2}^2)\\
		&~+x_s(y_0^2y_s+y_1^2y_s+y_s^3+y_sy_{s^2}^2)+x_{s^2}(y_0^2y_{s^2}+y_1^2y_{s^2}+y_s^2y_{s^2}+y_{s^2}^3).
	\end{align*}
	There exist simple $4$-colored $1$-designs with the following parameters:
	Six designs with parameters $1$-$(4,(n(0),n(1),n(s),n(s^2)),2)$ where $(n(0),n(1),n(s),n(s^2))$ is equal to
	$(2,2,0,0)$ or any one of its four permutations.
	The space of Jacobi polynomials $\CJ_{C_{4}^{\text{IV}},T}$ with $|T|=2$ may be generated by the two polynomials
	\begin{align*}
		\CJ^1_{C_{4}^{\text{IV}},2}&=(x_0^2+x_1^2+x_s^2+x_{s^2}^2)(y_0^2+y_1^2+y_s^2+y_{s^2}^2),\\
		\CJ^2_{C_{4}^{\text{IV}},2}&=x_0^2y_0^2+2x_0x_1y_0y_1+2x_0x_sy_0y_s+2x_0x_{s^2}y_0y_{s^2}+x_1^2y_1^2+2x_1x_sy_1y_s\\
		&~+2x_1x_{s^2}y_1y_{s^2}+x_s^2y_s^2+2x_sx_{s^2}y_sy_{s^2}+x_{s^2}^2y_{s^2}^2.
	\end{align*}
	Combining these two equations we obtain $2$-designs with parameters
	\[
	2\text{-}(4,(2,2,0,0),(\lambda_1(P),(\lambda_2(P))),
	\]
	where $\lambda(P)$'s are shown in Table~\ref{Tab:Type_IV_length4}.
	By the coefficient of the term $y_0^2y_1^2$ in the complete weight enumerator of the code, we obtain an upper (resp. lower) bound of $D_{\lambda_{\max}(P)}(4,(2,2,0,0),2)$ (resp. $C_{\lambda_{\max}(P)}(4,(2,2,0,0),2)$).
	\[
		D_{\lambda_{\max}(P)}(4,(2,2,0,0),2)
		\leq 2 \leq 
		C_{\lambda_{\min}(P)}(4,(2,2,0,0),2).
	\]
	The $\lambda_{\max}(P)$'s (resp. $\lambda_{\min}(P)$'s) are 
	shown in Table~\ref{Tab:Type_IV_length4}.
	Similarly, we can obtain an upper (resp. lower) bound of $D$ (resp. $C$) 
	$4$-colored $2$-designs where $(n(0),n(1),n(s),n(s^2))$ is equal to anyone of
	the four choices: 
	$(2,0,2,0)$, $(2,0,0,2)$, $(0,2,2,0)$, $(0,0,2,2)$.
	\begin{table}[h]
		\centering
		\caption{$\lambda$'s in $4$-colored $2$-designs}
		\label{Tab:Type_IV_length4}
		\begin{tabular}{|c||c|c|c|c|c|c|c|c|c|c|}
			\hline
			$P$ & $00$ & $01$ & $0s$ & $0s^2$ & $11$ & $1s$ & $1s^2$ & $ss$ & $ss^2$ & $s^2s^2$ \\
			\hline
			$\lambda_{1}(P)$ & $1$ & $0$ & $0$ & $0$ & $1$ & $0$ & $0$ & $0$ & $0$ & $0$ \\
			\hline
			$\lambda_{2}(P)$ & $0$ & $2$ & $0$ & $0$ & $0$ & $0$ & $0$ & $0$ & $0$ & $0$ \\
			\hline
			$\lambda_{\max}(P)$ & $1$ & $2$ & $0$ & $0$ & $1$ & $0$ & $0$ & $0$ & $0$ & $0$ \\
			\hline
			$\lambda_{\min}(P)$ & $0$ & $0$ & $0$ & $0$ & $0$ & $0$ & $0$ & $0$ & $0$ & $0$ \\
			\hline
		\end{tabular}
	\end{table}

\end{ex}

\subsubsection{Length $6$}

\begin{ex}
	Let $C_{6}^{\text{IV}}$ be the second Hermitian self-dual code over $\FF_4$ of length $6$ in \cite{HM}. 
	\begin{align*}
		f[6]&=2u^6+2u^5v+3u^4v^2+4u^3v^3+3u^2v^4+2uv^5+2v^6.
	\end{align*}
	
	In this case, it holds the following Lemmas and Theorem.
	
	\begin{lem} \label{lem:24}
		A basis of $M_{2,4}^{\widehat{G_{\rm{IV}}}}$ is obtained by applying $R(f,\widehat{G_{\rm{IV}}})$ with~$f$ running over the monomials
		\[
		x_0^2y_0^4,\;x_0^2y_1^2y_s^2,\;x_0x_1y_0y_1y_s^2.
		\]
	\end{lem}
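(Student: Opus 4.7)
The plan is to combine the dimension supplied by the bivariate Molien series with a direct construction of enough linearly independent invariants by Reynolds averaging.

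From the expression $f[6]=2u^6+2u^5v+3u^4v^2+4u^3v^3+3u^2v^4+2uv^5+2v^6$ recorded immediately above the lemma, the coefficient of $u^2v^4$ equals $3$, so $\dim_{\CC} M_{2,4}^{\widehat{G_{\rm{IV}}}}=3$. It therefore suffices to produce three linearly independent invariants of bidegree $(2,4)$ and show that they actually arise as Reynolds images of the three given monomials.

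As noted immediately after the definition of the Reynolds operator in the excerpt, $R(f,\widehat{G_{\rm{IV}}})$ is always an invariant. Every element of $\widehat{G_{\rm{IV}}}$ is block-diagonal with two copies of the same $g\in G_{\rm{IV}}$, so it preserves the $(x,y)$-bidegree grading; consequently, applying $R(\cdot,\widehat{G_{\rm{IV}}})$ to each of the seed monomials $x_0^2y_0^4$, $x_0^2y_1^2y_s^2$, $x_0x_1y_0y_1y_s^2$ yields three elements of $M_{2,4}^{\widehat{G_{\rm{IV}}}}$.

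To prove linear independence I would extract the coefficients of three carefully chosen test monomials in each Reynolds image and assemble them into a $3\times 3$ matrix; the seed monomials themselves are the natural test monomials. Once that matrix is shown to be nonsingular, the three invariants are independent, and combined with the Molien dimension equal to $3$ they form a basis. The most efficient test monomial for distinguishing the third image from the first two is $x_0x_1y_0y_1y_s^2$ itself, since no $(g,g)\cdot x_0^2y_0^4$ or $(g,g)\cdot x_0^2y_1^2y_s^2$ can yield a monomial whose $x$-part has two distinct variables $x_0x_1$ unless the Hadamard-type generator is used to produce it with carefully tracked cancellations — a short bookkeeping argument on the four generating matrices of $G_{\rm{IV}}$ lets one read off the desired coefficients.

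The main obstacle is carrying out the expansion explicitly: with $|G_{\rm{IV}}|=576$ and the Hadamard-type generator turning every variable into a sum of four variables, each Reynolds image has many terms before simplification. This is unavoidable but entirely routine in Magma (which the paper already uses for all computations); the actual linear-independence step then reduces to a small determinant.
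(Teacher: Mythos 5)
Your proposal is correct and matches what the paper does implicitly: the lemma is stated without a written proof, resting on the coefficient $3u^2v^4$ in the displayed Molien series $f[6]$ (so $\dim M_{2,4}^{\widehat{G_{\rm{IV}}}}=3$) together with a Magma verification that the three Reynolds images are linearly independent. Your outline is the same argument; the only step you leave pending is the explicit $3\times 3$ rank computation, which (as in the paper) is a routine machine check, and note that nonvanishing of each Reynolds image is automatically subsumed in that linear-independence verification.
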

	
	\begin{lem} \label{lem:33}
		A basis of $M_{3,3}^{\widehat{G_{\rm{IV}}}}$ is obtained by applying 
		$R(f,\widehat{G_{\rm{IV}}})$ with~$f$ running over the monomials
		\[
			x_0^3y_0^3,\;x_0^2x_1y_1y_s^2,\;x_0x_1^2y_0y_s^2,\;x_0x_1x_sy_0y_1y_s.
		\]
	\end{lem}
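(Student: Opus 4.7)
The plan is three-fold: first I will read off $\dim M_{3,3}^{\widehat{G_{\rm{IV}}}}$ from the bivariate Molien series already displayed; second I will confirm that the Reynolds operator produces invariants of bidegree $(3,3)$; and third I will verify linear independence of the four resulting invariants. Once the dimension is pinned down, any four linearly independent invariants of bidegree $(3,3)$ automatically form a basis, so all the content is in the linear independence step.

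For the dimension, the expansion $f[6]=2u^6+2u^5v+3u^4v^2+4u^3v^3+3u^2v^4+2uv^5+2v^6$ given immediately before the lemma shows that the coefficient of $u^3v^3$ equals $4$, so $\dim M_{3,3}^{\widehat{G_{\rm{IV}}}}=4$. For the invariance and bidegree statement, the Reynolds operator $R(\,\cdot\,,\widehat{G_{\rm{IV}}})$ is a projection onto the invariant ring by construction, and because $\widehat{G_{\rm{IV}}}$ is the block-diagonal doubling of $G_{\rm{IV}}$, it preserves the $x$-degree and the $y$-degree separately; hence $R(f,\widehat{G_{\rm{IV}}})\in M_{3,3}^{\widehat{G_{\rm{IV}}}}$ for each $f$ in the list.

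The main obstacle is therefore the linear independence of the four Reynolds images. My plan is to exploit the $x$-shape of the seed monomials: the four $x$-parts $x_0^3$, $x_0^2x_1$, $x_0x_1^2$, $x_0x_1x_s$ represent three genuinely different orbits of $G_{\rm{IV}}$ acting on degree-$3$ monomials in $\{x_a\}_{a\in\FF_4}$ (a cube, a square times a distinct variable, and three distinct variables), and the two quadratic patterns are distinguished further by which $y$-monomial ($y_0^3$, $y_1y_s^2$, $y_0y_s^2$, $y_0y_1y_s$) is paired with them. Concretely, I will extract the $4\times 4$ coefficient matrix of the four Reynolds images at the four witness monomials $x_0^3y_0^3$, $x_0^2x_1y_1y_s^2$, $x_0x_1^2y_0y_s^2$, $x_0x_1x_sy_0y_1y_s$ and check that it is nonsingular. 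The expected structure is roughly block-triangular when the witness monomials are ordered by $x$-orbit type, so the determinant reduces to a product of positive counts of group-elements stabilising each witness. Because $|G_{\rm{IV}}|=576$, carrying out the Reynolds sums by hand is impractical, and I would delegate the final rank check to Magma, consistent with the computational convention stated in the introduction.
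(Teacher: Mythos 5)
Your proposal is correct and matches what the paper itself does: the paper states this lemma without a written proof, relying exactly on the Molien-series coefficient $4$ of $u^3v^3$ in $f[6]$ to fix $\dim M_{3,3}^{\widehat{G_{\rm{IV}}}}$ and on a Magma computation to confirm that the four Reynolds images are linearly independent. Your side remark that the $4\times4$ coefficient matrix should be block-triangular is only a heuristic (the MacWilliams-type generator of $G_{\rm{IV}}$ mixes all monomial orbits, so the Reynolds images are far from sparse), but since you correctly defer the nonsingularity check to the computer, this does not affect the validity of the argument.
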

	
	We need to observe that that specialization is one-to-one on those spaces.
	
	\begin{lem} \label{lem:SS}
		For $\ell=1,2$ we have
		\[
			\dim ({S_{8}} \cdot M_{\ell,6-\ell}^{\widehat{G_{\rm{IV}}}})
			=
			\dim(M_{\ell,6-\ell}^{\widehat{G_{\rm{IV}}}}).
		\]
	\end{lem}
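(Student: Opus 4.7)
The plan is to prove injectivity of the linear map $S_8$ restricted to $M^{\widehat{G_{\text{IV}}}}_{\ell, 6-\ell}$ for $\ell = 1, 2$; since both source and target are finite-dimensional, equality of dimensions is then immediate from rank--nullity.

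First, I would produce explicit bases of the two invariant spaces. For $\ell = 2$ a basis consisting of three Reynolds averages is already provided by Lemma~\ref{lem:24}. For $\ell = 1$, the coefficient $2uv^5$ of $f[6]$ gives $\dim M^{\widehat{G_{\text{IV}}}}_{1,5} = 2$, so applying $R(-, \widehat{G_{\text{IV}}})$ to a short list of bidegree-$(1,5)$ monomial representatives (for instance $x_0 y_0^5$ and $x_0 y_0 y_1^2 y_s^2$) will produce two invariants whose linear independence can be checked directly.

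Next, I would apply $S_8$, which substitutes $x_s, x_{s^2} \mapsto x_1$ and $y_s, y_{s^2} \mapsto y_1$, to each basis element, yielding bihomogeneous polynomials in $x_0, x_1, y_0, y_1$ of bidegree $(\ell, 6-\ell)$. To establish injectivity it then suffices, for each $\ell$, to exhibit $\dim M^{\widehat{G_{\text{IV}}}}_{\ell, 6-\ell}$ many monomials in $x_0, x_1, y_0, y_1$ on which the matrix of coefficients of the specialized basis is non-singular; equivalently, specializing $y_0, y_1$ to suitable numerical values and checking that the resulting polynomials in $x_0, x_1$ remain linearly independent closes the argument.

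The principal obstacle is bookkeeping rather than mathematical depth: each Reynolds average is a sum over $|\widehat{G_{\text{IV}}}| = 576$ matrices, and tracking coefficients through the collapse $\{x_1, x_s, x_{s^2}\} \mapsto x_1$ and $\{y_1, y_s, y_{s^2}\} \mapsto y_1$ and the subsequent re-collection into like terms is laborious. The final rank check is a routine finite-dimensional linear-algebra computation, naturally handled by the Magma routine referenced in the introduction.
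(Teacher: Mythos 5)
Your proposal is correct and follows essentially the same route as the paper, whose proof consists of the single sentence ``We obtain it by taking the image by $S_{8}$ of the preceding bases'' --- i.e.\ apply the specialization to explicit bases and verify the images remain linearly independent. You supply the one detail the paper glosses over (a basis of $M^{\widehat{G_{\rm{IV}}}}_{1,5}$, correctly read off as two-dimensional from the $2uv^5$ term of $f[6]$), but the argument is the same computational injectivity check.
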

	
	\begin{proof}
		We obtain it by taking the image by ${S_{8}}$ of the preceding bases.
	\end{proof}
	
	\begin{thm}\label{Thm:Main3}
		The codewords of fixed composition in the Hermitian Type~$\IV$ code of 
		length~$6$ hold $4$-colored $2$-designs.
	\end{thm}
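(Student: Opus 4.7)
The plan is to replicate, in the $\FF_{4}$ setting, the strategy of Bonnecaze--Solé--Udaya~\cite{BSU2001} used for the ternary Golay code. By Theorem~\ref{Thm:JacToDesign} specialized to $\ell=1$, $v_{1}=6$ and $t_{1}=t$, the codewords of $C_{6}^{\IV}$ of each fixed composition form a $4$-colored $t$-design precisely when the complete Jacobi polynomial $\CJ_{C_{6}^{\IV},T}$ depends only on $t=|T|$, not on the particular set $T\subseteq[6]$ with $|T|=t$. So my target is to establish this independence for $t=1$ and $t=2$.

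First, since $C_{6}^{\IV}$ is Hermitian Type~$\IV$, the MacWilliams identity (Theorem~\ref{Thm:JacobiMacWilliams}) together with the Hermitian self-duality condition places $\CJ_{C_{6}^{\IV},T}$ in the invariant space $M_{t,6-t}^{\widehat{G_{\rm{IV}}}}$. Applying the specialization operator $S_{8}$, which identifies $x_{1},x_{s},x_{s^{2}}$ and likewise for the $y$-variables, converts $\CJ_{C_{6}^{\IV},T}$ into the ordinary Jacobi polynomial $J_{C_{6}^{\IV},T}$. By Lemma~\ref{lem:SS}, the restriction of $S_{8}$ to $M_{\ell,6-\ell}^{\widehat{G_{\rm{IV}}}}$ is injective for $\ell=1,2$, so knowledge of $J_{C_{6}^{\IV},T}$ uniquely determines $\CJ_{C_{6}^{\IV},T}$ within the invariant space.

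Next, I would verify, using the explicit generator matrix of the code in~\cite{HM} (a finite Magma computation), that $C_{6}^{\IV}$ is Hamming $2$-homogeneous with no nonzero codeword of weight less than $2$. Granted this, Lemma~\ref{Lem:BSULem3IV} gives $J_{C_{6}^{\IV},T}=A_{4}^{\,t}\cdot W_{C_{6}^{\IV}}$ for $t=1,2$, which is manifestly independent of $T$. Combining this with the injectivity above, $\CJ_{C_{6}^{\IV},T}$ is also independent of $T$, and Theorem~\ref{Thm:JacToDesign} then delivers the claim.

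The principal obstacle is the Hamming $2$-homogeneity step: this is a special feature of the particular code selected in~\cite{HM} rather than a general property of length-$6$ Hermitian Type~$\IV$ codes, so it must be checked by direct enumeration. The invariant-theoretic machinery (bivariate Molien series yielding the dimension of $M_{2,4}^{\widehat{G_{\rm{IV}}}}$, the explicit bases in Lemmas~\ref{lem:24} and~\ref{lem:33}, and injectivity of $S_{8}$ via Lemma~\ref{lem:SS}) is already in place, so the lifting from the ordinary Jacobi polynomial to the complete Jacobi polynomial is essentially automatic once the Hamming homogeneity is confirmed.
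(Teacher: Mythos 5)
Your proposal is correct and follows essentially the same route as the paper's proof: place $\CJ_{C_{6}^{\IV},T}$ in the invariant space $M_{2,4}^{\widehat{G_{\rm{IV}}}}$, use the Hamming homogeneity of the code together with Lemma~\ref{Lem:BSULem3IV} to pin down its specialization $J_{C_{6}^{\IV},T}=A_{4}^{t}\cdot W_{C_{6}^{\IV}}$ independently of $T$, and then invoke the injectivity of $S_{8}$ on that space (Lemma~\ref{lem:SS}) to conclude that $\CJ_{C_{6}^{\IV},T}$ itself is independent of $T$. The only cosmetic difference is that the paper asserts the code is $3$-homogeneous (which implies the $2$-homogeneity you require) and phrases the final step as solving a $3\times 3$ linear system on the basis of Lemma~\ref{lem:24}, which is the same injectivity argument in concrete form.
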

	
	\begin{proof}
		We need to show that $\CJ_{C,T}$ does not depend on $T$ for $|T|=2$.
		Since it lives in $M_{2,4}^{\widehat{G_{\rm{IV}}}}$, we can expand it with indeterminate coefficients on the basis given in Lemma~\ref{lem:24}.
		The Hermitian code is 3-homogeneous.
		By specialization and Lemma~\ref{Lem:BSULem3IV} and Lemma~\ref{lem:SS}, we determine these completely by solving a $3\times3$ linear system.
	\end{proof}
	
	Note that we can expand $\CJ_{C,T}$ for $|T|=3$ with indeterminate coefficients on the basis given in Lemma~\ref{lem:33} since it lives in $M_{3,3}^{\widehat{G_{\rm{IV}}}}$.
	But we can't determine these completely by solving a $3\times3$ linear system by specialization and Lemma~\ref{Lem:BSULem3IV} and Lemma~\ref{lem:SS}.
	
	Since the codewords of fixed composition in $C_{6}^{\text{IV}}$ holds $4$-colored $2$-design, we assume that
	$|T|=1,2$. Then
	\begin{align*}
		\CJ_{C_{6}^{\text{IV}},1}&=x_0(y_0^5+5y_0y_1^2y_s^2+5y_0y_1^2y_{s^2}^2+5y_0y_s^2y_{s^2}^2)+x_1(5y_0^2y_1y_s^2\\
		&~+5y_0^2y_1y_{s^2}^2+y_1^5+5y_1y_s^2y_{s^2}^2)+x_s(5y_0^2y_1^2y_s+5y_0^2y_sy_{s^2}^2\\
		&~+5y_1^2y_sy_{s^2}^2+y_s^5)+x_{s^2}(5y_0^2y_1^2y_{s^2}+5y_0^2y_s^2y_{s^2}+5y_1^2y_s^2y_{s^2}+y_{s^2}^5),\\
		\CJ_{C_{6}^{\text{IV}},2}&=x_0^2(y_0^4+y_1^2y_s^2+y_1^2y_{s^2}^2+y_s^2y_{s^2}^2)+x_0x_1(4y_0y_1y_s^2+4y_0y_1y_{s^2}^2)\\
		&~+x_0x_s(4y_0y_1^2y_s+4y_0y_sy_{s^2}^2)+x_0x_{s^2}(4y_0y_1y_{s^2}^2+4y_0y_s^2y_{s^2})\\
		&~+x_1^2(y_0^2y_s^2+y_0^2y_{s^2}^2+y_1^4+y_s^2y_{s^2}^2)+x_1x_s(4y_0^2y_1y_s+4y_1y_sy_{s^2}^2)\\
		&~+x_1x_{s^2}(4y_0^2y_1y_{s^2}+y_1y_s^2y_{s^2})+x_s^2(y_0^2y_1^2+y_0^2y_{s^2}^2+y_1^2y_{s^2}^2+y_s^4)\\
		&~+x_sx_{s^2}(4y_0^2y_sy_{s^2}+4y_1^2y_sy_{s^2})+x_{s^2}^2(y_0^2y_1^2+y_0^2y_s^2+y_1^2y_s^2+y_{s^2}^4).
	\end{align*}
	
	\begin{cor}
		Let $\FF_4=\{0,1,s,s^2\}$.
		There exist simple $4$-colored $2$-designs with the following parameters:
		Four designs with parameters $2$-$(6,(n(0),n(1),n(s),n(s^2)),15)$ where $(n(0),n(1),n(s),n(s^2))$ is equal to
		$(2,2,2,0)$ or any one of its four permutations.
	\end{cor}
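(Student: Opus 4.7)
The plan is to invoke Theorem~\ref{Thm:Main3} directly. That theorem asserts that for the Hermitian Type~$\IV$ code $C_{6}^{\text{IV}}$, the set of codewords having any prescribed composition $(n(0),n(1),n(s),n(s^{2}))$ forms a $4$-colored $2$-design on $6$ varieties. Each of the four compositions obtained from $(2,2,2,0)$ by moving the unique zero entry into one of the four slots of the tuple is therefore already known to yield such a design, so the existence portion of the claim requires no further argument; only the parameter $|\mathcal{B}|=15$ needs to be pinned down.

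To identify the block count I would exploit the already-displayed polynomial $\CJ_{C_{6}^{\text{IV}},2}$. Setting $x_{a}=y_{a}$ for every $a\in\FF_{4}$ collapses the Jacobi polynomial into the complete weight enumerator of $C_{6}^{\text{IV}}$, so the coefficient of $y_{0}^{2}y_{1}^{2}y_{s}^{2}$ in the resulting polynomial equals the number of codewords of composition $(2,2,2,0)$, which is precisely $|\mathcal{B}|$. Gathering the terms of $\CJ_{C_{6}^{\text{IV}},2}$ that contribute to $y_{0}^{2}y_{1}^{2}y_{s}^{2}$ after this substitution, namely
\[
    x_{0}^{2}\,y_{1}^{2}y_{s}^{2},\quad
    x_{1}^{2}\,y_{0}^{2}y_{s}^{2},\quad
    x_{s}^{2}\,y_{0}^{2}y_{1}^{2},\quad
    4x_{0}x_{1}\,y_{0}y_{1}y_{s}^{2},\quad
    4x_{0}x_{s}\,y_{0}y_{1}^{2}y_{s},\quad
    4x_{1}x_{s}\,y_{0}^{2}y_{1}y_{s},
\]
yields $1+1+1+4+4+4=15$. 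The same count applies to each of the three remaining permutations of $(2,2,2,0)$ by the evident symmetry of $\CJ_{C_{6}^{\text{IV}},2}$ under permutations of the four $\FF_{4}$-labels.

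Simplicity of each of the four designs is automatic: an $\FF_{4}$-codeword is determined by its colored support, so for any given composition $\bm{s}$ the multiset $\mathcal{B}(C_{\bm{s}})$ is in fact a set. The substantive content is Theorem~\ref{Thm:Main3}; the only remaining work is coefficient bookkeeping in the explicit polynomial, so no serious obstacle arises. If anything, the one point deserving care is verifying that $(2,2,2,0)$ has exactly four distinct permutations (the four positions available for the unique zero), which matches the four designs claimed in the corollary.
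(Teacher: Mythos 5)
Your argument is correct and matches the paper's (implicit) justification: the design property for each fixed composition is exactly Theorem~\ref{Thm:Main3}, and the block count $15$ is read off as the coefficient of $y_0^2y_1^2y_s^2$ in the complete weight enumerator, obtained from the displayed $\CJ_{C_6^{\mathrm{IV}},2}$ by the specialization $x_a\leftarrow y_a$ (your tally $1+1+1+4+4+4=15$ checks out, and the symmetry under the label permutations induced by $G_{\mathrm{IV}}$ handles the other three compositions). Your added remark that simplicity is automatic because a codeword is determined by its colored support is a correct point the paper leaves unstated.
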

	
	The space of Jacobi polynomials $\CJ_{C_{6}^{\text{IV}},T}$ with $|T|=3$ may be generated by the two polynomials
	\begin{align*}
		\CJ^1_{C_{6}^{\text{IV}},3}&=x_0^3y_0^3+3x_0^2x_1y_1y_s^2+3x_0^2x_sy_sy_{s^2}^2+3x_0^2x_{s^2}y_1^2y_{s^2}+3x_0x_1^2y_0y_{s^2}^2\\
		&~+6x_0x_1x_sy_0y_1y_s+6x_0x_1x_{s^2}y_0y_1y_{s^2}+3x_0x_s^2y_0y_1^2+6x_0x_sx_{s^2}y_0y_sy_{s^2}\\
		&~+3x_0x_{s^2}^2y_0y_s^2+x_1^3y_1^3+3x_1^2x_sy_0^2y_s+3x_1^2x_{s^2}y_s^2y_{s^2}+3x_1x_s^2y_1y_{s^2}^2\\
		&~+6x_1x_sx_{s^2}y_1y_sy_{s^2}+3x_1x_{s^2}^2y_0^2y_1+x_s^3y_s^3+3x_s^2x_{s^2}y_0^2y_{s^2}\\
		&~+3x_sx_{s^2}^2y_1^2y_s+x_{s^2}^3y_{s^2}^3,\\
		\CJ^2_{C_{6}^{\text{IV}},3}&=x_0^3y_0^3+3x_0^2x_1y_1y_{s^2}^2+3x_0^2x_sy_1^2y_s+3x_0^2x_{s^2}y_s^2y_{s^2}+3x_0x_1^2y_0y_s^2\\
		&~+6x_0x_1x_sy_0y_1y_s+6x_0x_1x_{s^2}y_0y_1y_{s^2}+3x_0x_s^2y_0y_{s^2}^2+6x_0x_sx_{s^2}y_0y_sy_{s^2}\\
		&~+3x_0x_{s^2}^2y_0y_1^2+x_1^3y_1^3+3x_1^2x_sy_sy_{s^2}^2+3x_1^2x_{s^2}y_0^2y_{s^2}+3x_1x_s^2y_0^2y_1\\
		&~+6x_1x_sx_{s^2}y_1y_sy_{s^2}+3x_1x_{s^2}^2y_1y_s^2+x_s^3y_s^3+3x_s^2x_{s^2}y_1^2y_{s^2}+3x_sx_{s^2}^2y_0^2y_s\\
		&~+x_{s^2}^3y_{s^2}^3.
	\end{align*}
	Combining these two equations we obtain $3$-designs with parameters
	\[
	3\text{-}(6,(2,2,2,0),(\lambda_1(P),(\lambda_2(P))),
	\]
	where $\lambda(P)$'s are shown in Table~\ref{Tab:Type_IV_length6}.
	By the coefficient of the term $y_0^2y_1^2y_s^2$ in the complete weight enumerator of the code, we obtain an upper (resp. lower) bound of $D_{\lambda_{\max}(P)}(6,(2,2,2,0),3)$ (resp. $C_{\lambda_{\max}(P)}(6,(2,2,2,0),3)$).
	\[
		D_{\lambda_{\max}(P)}(6,(2,2,2,0),3)
		\leq 15 \leq 
		C_{\lambda_{\min}(P)}(6,(2,2,2,0),3).
	\]
		The $\lambda_{\max}(P)$'s (resp. $\lambda_{\min}(P)$'s) are 
	shown in Table~\ref{Tab:Type_IV_length6}.
	Similarly, we can obtain an upper (resp. lower) bound of $D$ (resp. $C$) 
	$4$-colored $3$-designs where $(n(0),n(1),n(s),n(s^2))$ is equal to anyone of
	the three choices: 
	$(2,2,0,2)$, $(2,0,2,2)$, $(0,2,2,2)$. 
	
	\begin{table}[h]
		\centering
		\caption{$\lambda$'s in $4$-colored $3$-designs}
		\label{Tab:Type_IV_length6}
		\begin{tabular}{|c||c|c|c|c|c|c|c|c|c|c|}
			\hline
			$P$ & $000$ & $001$ & $00s$ & $00s^2$ & $011$ & $01s$ & $01s^2$ & $0ss$ & $0ss^2$ & $0s^2s^2$ \\
			\hline
			$\lambda_{1}(P)$ & $0$ & $0$ & $3$ & $0$ & $3$ & $6$ & $0$ & $0$ & $0$ & $0$ \\
			\hline
			$\lambda_{2}(P)$ & $0$ & $3$ & $0$ & $0$ & $0$ & $6$ & $0$ & $3$ & $0$ & $0$ \\
			\hline
			$\lambda_{\max}(P)$ & $0$ & $3$ & $3$ & $0$ & $3$ & $6$ & $0$ & $3$ & $0$ & $0$ \\
			\hline
			$\lambda_{\min}(P)$ & $0$ & $0$ & $0$ & $0$ & $0$ & $6$ & $0$ & $0$ & $0$ & $0$ \\
			\hline
			$P$ & $111$ & $11s$ & $11s^2$ & $1ss$ & $1ss^2$ & $1s^2s^2$ & $sss$ & $sss^2$ & $ss^2s^2$ & $s^2s^2s^2$ \\
			\hline
			$\lambda_{1}(P)$ & $0$ & $0$ & $0$ & $3$ & $0$ & $0$ & $0$ & $0$ & $0$ & $0$ \\
			\hline
			$\lambda_{2}(P)$ & $0$ & $3$ & $0$ & $0$ & $0$ & $0$ & $0$ & $0$ & $0$ & $0$ \\
			\hline
			$\lambda_{\max}(P)$ & $0$ & $3$ & $0$ & $3$ & $0$ & $0$ & $0$ & $0$ & $0$ & $0$ \\
			\hline
			$\lambda_{\min}(P)$ & $0$ & $0$ & $0$ & $0$ & $0$ & $0$ & $0$ & $0$ & $0$ & $0$ \\
			\hline
		\end{tabular}
	\end{table}

\end{ex}

\subsubsection{Length $8$}

\begin{ex}
	Let $C_{8}^{\text{IV}}$ be the third Hermitian self-dual code over $\FF_4$ of length $8$ in \cite{HM}. 
	\begin{align*}
		f[8]&=3u^8+5u^7v+7u^6v^2+8u^5v^3+10u^4v^4+\cdots.
	\end{align*}
	Since the codewords of fixed composition in $C_{8}^{\text{IV}}$ holds $4$-colored $3$-design, we assume that
	$|T|=1,2,3$.
	Then,
	\begin{align*}
		\CJ_{C_{8}^{\text{IV}},1}&=x_0(y_0^7+7y_0^3y_1^4+7y_0^3y_s^4+7y_0^3y_{s^2}^4+42y_0y_1^2y_s^2y_{s^2}^2)\\
		&~+x_1(7y_0^4y_1^3+42y_0^2y_1y_s^2y_{s^2}^2+y_1^7+7y_1^3y_s^4+7y_1^3y_{s^2}^4)\\
		&~+x_s(7y_0^4y_s^3+42y_0^2y_1^2y_sy_{s^2}^2+7y_1^4y_s^3+y_s^7+7y_s^3y_{s^2}^4)\\
		&~+x_{s^2}(7y_0^4y_{s^2}^3+42y_0^2y_1^2y_s^2y_{s^2}+7y_1^4y_{s^2}^3+7y_s^4y_{s^2}^3+y_{s^2}^7),\\
		\CJ_{C_{8}^{\text{IV}},2}&=x_0^2(y_0^6+3y_0^2y_1^4+3y_0^2y_s^4+3y_0^2y_{s^2}^4+6y_1^2y_s^2y_{s^2}^2)\\
		&~+x_0x_1(8y_0^3y_1^3+24y_0y_1y_s^2y_{s^2}^2)+x_0x_s(8y_0^3y_s^3+24y_0y_1^2y_sy_{s^2}^2)\\
		&~+x_0x_{s^2}(8y_0^3y_{s^2}^3+24y_0y_1^2y_s^2y_{s^2})+x_1^2(3y_0^4y_1^2+6y_0^2y_s^2y_{s^2}^2\\
		&~+y_1^6+3y_1^2y_s^4+3y_1^2y_{s^2}^4)+x_1x_s(24y_0^2y_1y_sy_{s^2}^2+8y_1^3y_s^3)\\
		&~+x_1x_{s^2}(24y_0^2y_1y_s^2y_{s^2}+8y_1^3y_{s^2}^3)+x_s^2(3y_0^4y_s^2+6y_0^2y_1^2y_{s^2}^2\\
		&~+3y_1^4y_s^2+3y_s^2y_{s^2}^4+y_s^6)+x_sx_{s^2}(24y_0^2y_1^2y_sy_{s^2}+8y_s^3y_{s^2}^3)\\
		&~+x_{s^2}^2(3y_0^4y_{s^2}^2+6y_0^2y_1^2y_s^2+3y_1^4y_{s^2}^2+3y_s^4y_{s^2}^2+y_{s^2}^6),\\
		\CJ_{C_{8}^{\text{IV}},3}&=x_0^3(y_0^5+y_0y_s^4+y_0y_s^4+y_0y_{s^2}^4)+x_0^2x_1(6y_0^2y_1^3+6y_1y_s^2y_{s^2}^2)\\
		&~+x_0^2x_s(6y_0^2y_s^3+6y_1^2y_sy_{s^2}^2)+x_0^2x_{s^2}(6y_0^2y_{s^2}^3+6y_1^2y_s^2y_{s^2})\\
		&~+x_0x_1^2(6y_0^3y_1^2+6y_0y_s^2y_{s^2}^2)+24x_0x_1x_sy_0y_1y_sy_{s^2}^2\\
		&~+24x_0x_1x_{s^2}y_0y_1y_s^2y_{s^2}+x_0x_s^2(6y_0^3y_s^2+6y_0y_1^2y_{s^2}^2)\\
		&~+24x_0x_sx_{s^2}y_0y_1^2y_sy_{s^2}+x_0x_{s^2}^2(6y_0^3y_{s^2}^2+6y_0y_1^2y_s^2)\\
		&~+x_1^3(y_0^4y_1+y_1^5+y_1y_s^4+y_1y_{s^2}^4)+x_1^2x_s(6y_0^2y_sy_{s^2}^2+6y_1^2y_s^3)\\
		&~+x_1^2x_{s^2}(6y_0^2y_s^2y_{s^2}+6y_1^2y_{s^2}^3)+x_1x_s^2(6y_0^2y_1y_{s^2}^2+6y_1^3y_s^2)\\
		&~+x_1x_{s^2}^2(6y_0^2y_1y_s^2+6y_1^3y_{s^2}^2)+24x_1x_sx_{s^2}y_0^2y_1y_sy_{s^2}\\
		&~+x_s^3(y_0^4y_s+y_1^4y_s+y_s^5+y_sy_{s^2}^4)+x_s^2x_{s^2}(6y_0^2y_1^2y_{s^2}+6y_s^2y_{s^2}^3)\\
		&~+x_sx_{s^2}^2(6y_0^2y_1^2y_s+6y_s^3y_{s^2}^2)+x_{s^2}^3(y_0^4y_{s^2}+y_1^4y_{s^2}+y_s^4y_{s^2}+y_{s^2}^5).
	\end{align*}
	There exist simple $4$-colored $3$-designs with the following parameters:
	Six designs with parameters $3$-$(8,(n(0),n(1),n(s),n(s^2)),14)$ where $(n(0),n(1),n(s),n(s^2))$ is equal to
	$(4,4,0,0)$ or any one of its four permutations.
	The space of Jacobi polynomials $\CJ_{C_{8}^{\text{IV}},T}$ with $|T|=4$ may be generated by the two polynomials
	\begin{align*}
		\CJ^1_{C_{8}^{\text{IV}},4}&=x_0^4(y_0^4+y_1^4+y_s^4+y_{s^2}^4)+x_0^2x_1^2(12y_0^2y_1^2+12y_s^2y_{s^2}^2)\\
		&~+x_0^2x_s^2(12y_0^2y_s^2+12y_1^2y_{s^2}^2)+x_0^2x_{s^2}^2(12y_0^2y_{s^2}^2+12y_1^2y_s^2)\\
		&~+96x_0x_1x_sx_{s^2}y_0y_1y_sy_{s^2}+x_1^4(y_0^4+y_1^4+y_s^4+y_{s^2}^4)+x_1^2x_s^2(12y_0^2y_{s^2}^2\\
		&~+12y_1^2y_s^2)+x_1^2x_{s^2}^2(12y_0^2y_s^2+12y_1^2y_{s^2}^2)+x_s^4(y_0^4+y_1^4+y_s^4+y_{s^2}^4)\\
		&~+x_s^2x_{s^2}^2(12y_0^2y_1^2+12y_s^2y_{s^2}^2)+x_{s^2}^4(y_0^4+y_1^4+y_s^4+y_{s^2}^4),\\
		\CJ^2_{C_{8}^{\text{IV}},4}&=x_0^4y_0^4+4x_0^3x_1y_0y_1^3+4x_0^3x_sy_0y_s^3+4x_0^3x_{s^2}y_0y_{s^2}^3+6x_0^2x_1^2y_0^2y_1^2\\
		&~+12x_0^2x_1x_sy_1y_sy_{s^2}^2+12x_0^2x_1x_{s^2}y_1y_s^2y_{s^2}+6x_0^2x_s^2y_0^2y_s^2\\
		&~+12x_0^2x_sx_{s^2}y_1^2y_sy_{s^2}+6x_0^2x_{s^2}^2y_0^2y_{s^2}^2+4x_0x_1^3y_0^3y_1+12x_0x_1^2x_sy_0y_sy_{s^2}^2\\
		&~+12x_0x_1^2x_{s^2}y_0y_s^2y_{s^2}+12x_0x_1x_s^2y_0y_1y_{s^2}^2+24x_0x_1x_sx_{s^2}y_0y_1y_sy_{s^2}\\
		&~+12x_0x_1x_{s^2}^2y_0y_1y_s^2+4x_0x_s^3y_0^3y_s+12x_0x_s^2x_{s^2}y_0y_1^2y_{s^2}\\
		&~+12x_0x_sx_{s^2}^2y_0y_1^2y_s+4x_0x_{s^2}^3y_0^3y_{s^2}+x_1^4y_1^4+4x_1^3x_sy_1y_s^3\\
		&~+4x_1^3x_{s^2}y_1y_{s^2}^3+6x_1^2x_s^2y_1^2y_s^2+12x_1^2x_sx_{s^2}y_0^2y_sy_{s^2}+6x_1^2x_{s^2}^2y_1^2y_{s^2}^2\\
		&~+4x_1x_s^3y_1^3y_s+12x_1x_s^2x_{s^2}y_0^2y_1y_{s^2}+12x_1x_sx_{s^2}^2y_0^2y_1y_s+4x_1x_{s^2}^3y_1^3y_{s^2}\\
		&~+x_s^4y_s^4+4x_s^3x_{s^2}y_sy_{s^2}^3+6x_s^2x_{s^2}^2y_s^2y_{s^2}^2+4x_sx_{s^2}^3y_s^3y_{s^2}+x_{s^2}^4y_{s^2}^4.
	\end{align*}
	Combining these two equations we obtain $4$-designs with parameters
	\[
		4\text{-}(8,(4,4,0,0),(\lambda_{1}^{1}(P),\lambda_{2}^{1}(P)))
		\text{ and }
		4\text{-}(8,(2,2,2,2),(\lambda_{1}^{2}(P),\lambda_{2}^{2}(P)),
	\]
	where $\lambda(P)$'s are shown in Table~\ref{Tab:Type_IV_length8}.
	By the coefficient of the term $y_0^{n(0)}y_1^{n(1)}y_s^{n(s)}y_{s^2}^{n(s^2)}$ in the complete weight enumerator of the code, we obtain an upper (resp. lower) bound of $D_{\lambda_{\max}(P)}(8,(n(0),n(1),n(s),n(s^2)),4)$ (resp. $C_{\lambda_{\max}(P)}(8,(n(0),n(1),n(s),n(s^2)),4)$).
	\begin{align*}
		D_{\lambda_{\max}^{1}(P)}(8,(4,4,0,0),4)
		&\leq 14 \leq 
		C_{\lambda_{\min}^{1}(P)}(8,(4,4,0,0),4),\\
		D_{\lambda_{\max}^{2}(P)}(8,(2,2,2,2),4)
		&\leq 168 \leq 
		C_{\lambda_{\min}^{2}(P)}(8,(2,2,2,2),4).
	\end{align*}
	The $\lambda_{\max}(P)$'s (resp. $\lambda_{\min}(P)$'s) are 
	shown in Table~\ref{Tab:Type_IV_length8}. 
	Similarly, we can obtain an upper (resp. lower) bound of $D$ (resp. $C$) 
	$4$-colored $3$-designs where $(n(0),n(1),n(s),n(s^2))$ is equal to anyone of
	the five choices: 
	$(4,0,4,0)$, $(4,0,0,4)$, $(0,4,4,0)$, $(0,4,0,4)$, $(0,0,4,4)$.
	\small
	\begin{table}[h]
		\centering
		\caption{$\lambda$'s in $4$-colored $4$-designs}
		\label{Tab:Type_IV_length8}
		\begin{tabular}{|c||c|c|c|c|c|c|c|}
			\hline
			$P$ & $0000$ & $0001$ & $000s$ & $000s^2$ & $0011$ & $001s$ & $001s^2$ \\
			\hline
			$\lambda_{1}^{1}(P)$ & $1$ & $0$ & $0$ & $0$ & $12$ & $0$ & $0$ \\
			\hline
			$\lambda_{2}^{1}(P)$ & $0$ & $4$ & $0$ & $0$ & $6$ & $0$ & $0$ \\
			\hline
			$\lambda_{\max}^{1}(P)$ & $1$ & $4$ & $0$ & $0$ & $12$ & $0$ & $0$ \\
			\hline
			$\lambda_{\min}^{1}(P)$ & $0$ & $0$ & $0$ & $0$ & $6$ & $0$ & $0$ \\
			\hline
			$\lambda_{1}^{2}(P)$ & $0$ & $0$ & $0$ & $0$ & $12$ & $0$ & $0$ \\
			\hline
			$\lambda_{2}^{2}(P)$ & $0$ & $0$ & $0$ & $0$ & $0$ & $12$ & $12$ \\
			\hline
			$\lambda_{\max}^{2}(P)$ & $0$ & $0$ & $0$ & $0$ & $12$ & $12$ & $12$ \\
			\hline
			$\lambda_{\min}^{2}(P)$ & $0$ & $0$ & $0$ & $0$ & $0$ & $0$ & $0$ \\
			\hline
			$P$ & $00ss$ & $00ss^2$ & $00s^2s^2$ & $0111$ & $011s$ & $011s^2$ & $01ss$ \\
			\hline
			$\lambda_{1}^{1}(P)$ & $0$ & $0$ & $0$ & $0$ & $0$ & $0$ & $0$ \\
			\hline
			$\lambda_{2}^{1}(P)$ & $0$ & $0$ & $0$ & $4$ & $0$ & $0$ & $0$ \\
			\hline
			$\lambda_{\max}^{1}(P)$ & $0$ & $0$ & $0$ & $4$ & $0$ & $0$ & $0$ \\
			\hline
			$\lambda_{\min}^{1}(P)$ & $0$ & $0$ & $0$ & $0$ & $0$ & $0$ & $0$ \\
			\hline
			$\lambda_{1}^{2}(P)$ & $12$ & $0$ & $12$ & $0$ & $0$ & $0$ & $0$ \\
			\hline
			$\lambda_{2}^{2}(P)$ & $0$ & $12$ & $0$ & $0$ & $12$ & $12$ & $12$ \\
			\hline
			$\lambda_{\max}^{2}(P)$ & $12$ & $12$ & $12$ & $0$ & $12$ & $12$ & $12$ \\
			\hline
			$\lambda_{\min}^{2}(P)$ & $0$ & $0$ & $0$ & $0$ & $0$ & $0$ & $0$ \\
			\hline
			$P$ & $01ss^2$ & $01s^2s^2$ & $0sss$ & $0sss^2$ & $0ss^2s^2$ & $0s^2s^2s^2$ & $1111$ \\
			\hline
			$\lambda_{1}^{1}(P)$ & $0$ & $0$ & $0$ & $0$ & $0$ & $0$ & $1$ \\
			\hline
			$\lambda_{2}^{1}(P)$ & $0$ & $0$ & $0$ & $0$ & $0$ & $0$ & $0$ \\
			\hline
			$\lambda_{\max}^{1}(P)$ & $0$ & $0$ & $0$ & $0$ & $0$ & $0$ & $1$ \\
			\hline
			$\lambda_{\min}^{1}(P)$ & $0$ & $0$ & $0$ & $0$ & $0$ & $0$ & $0$ \\
			\hline
			$\lambda_{1}^{2}(P)$ & $96$ & $0$ & $0$ & $0$ & $0$ & $0$ & $0$ \\
			\hline
			$\lambda_{2}^{2}(P)$ & $24$ & $12$ & $0$ & $12$ & $12$ & $0$ & $0$ \\
			\hline
			$\lambda_{\max}^{2}(P)$ & $96$ & $12$ & $0$ & $12$ & $12$ & $0$ & $0$ \\
			\hline
			$\lambda_{\min}^{2}(P)$ & $24$ & $0$ & $0$ & $0$ & $0$ & $0$ & $0$ \\
			\hline
			$P$ & $111s$ & $111s^2$ & $11ss$ & $11ss^2$ & $11s^2s^2$ & $1sss$ & $1sss^2$ \\
			\hline
			$\lambda_{1}^{1}(P)$ & $0$ & $0$ & $0$ & $0$ & $0$ & $0$ & $0$ \\
			\hline
			$\lambda_{2}^{1}(P)$ & $0$ & $0$ & $0$ & $0$ & $0$ & $0$ & $0$ \\
			\hline
			$\lambda_{\max}^{1}(P)$ & $0$ & $0$ & $0$ & $0$ & $0$ & $0$ & $0$ \\
			\hline
			$\lambda_{\min}^{1}(P)$ & $0$ & $0$ & $0$ & $0$ & $0$ & $0$ & $0$ \\
			\hline
			$\lambda_{1}^{2}(P)$ & $0$ & $0$ & $12$ & $0$ & $12$ & $0$ & $0$ \\
			\hline
			$\lambda_{2}^{2}(P)$ & $12$ & $0$ & $0$ & $12$ & $0$ & $0$ & $12$ \\
			\hline
			$\lambda_{\max}^{2}(P)$ & $12$ & $0$ & $12$ & $12$ & $12$ & $0$ & $12$ \\
			\hline
			$\lambda_{\min}^{2}(P)$ & $0$ & $0$ & $0$ & $0$ & $0$ & $0$ & $0$ \\
			\hline
			
			$P$ & $1ss^2s^2$ & $1s^2s^2s^2$ & $ssss$ & $ssss^2$ & $sss^2s^2$ & $ss^2s^2s^2$ & $s^2s^2s^2s^2$ \\
			\hline
			$\lambda_{1}^{1}(P)$ & $0$ & $0$ & $0$ & $0$ & $0$ & $0$ & $0$ \\
			\hline
			$\lambda_{2}^{1}(P)$ & $0$ & $0$ & $0$ & $0$ & $0$ & $0$ & $0$ \\
			\hline
			$\lambda_{\max}^{1}(P)$ & $0$ & $0$ & $0$ & $0$ & $0$ & $0$ & $0$ \\
			\hline
			$\lambda_{\min}^{1}(P)$ & $0$ & $0$ & $0$ & $0$ & $0$ & $0$ & $0$ \\
			\hline
			$\lambda_{1}^{2}(P)$ & $0$ & $0$ & $0$ & $0$ & $12$ & $0$ & $0$ \\
			\hline
			$\lambda_{2}^{2}(P)$ & $12$ & $0$ & $0$ & $0$ & $0$ & $0$ & $0$ \\
			\hline
			$\lambda_{\max}^{2}(P)$ & $12$ & $0$ & $0$ & $0$ & $12$ & $0$ & $0$ \\
			\hline
			$\lambda_{\min}^{2}(P)$ & $0$ & $0$ & $0$ & $0$ & $0$ & $0$ & $0$ \\
			\hline
		\end{tabular}
	\end{table}
	\normalsize

\end{ex}

In the case the extremal Type III (resp. Type IV) code of length $n$ containing the all-one vector, it holds  $3$- (resp. $4$-) colored $t$-design as in~Table~\ref{Tab:Type_III_colored_designs} 
(resp. Table~\ref{Tab:Type_IV_colored_designs}).

\begin{table}[h]
	\centering
	\caption{$3$-colored $t$-$(n,(n_0,n_1,n_2),|\mathcal{B}|)$ design in Type III code}
	\label{Tab:Type_III_colored_designs}
	\begin{tabular}{c|c|c|c}
		\hline
		$n$ & $t$ & \shortstack{Blocks in the cwe $(n_0,n_1,n_2)$\\ up to permutation} & Number of blocks \\
		\hline
		12&1&$(6,3,3)$&220\\
		\hline
		&1&$(6,6,0)$&22\\
		\hline
		&2&$(6,3,3)$&220\\
		\hline
		&2&$(6,6,0)$&22\\
		\hline
		&3&$(6,3,3)$&220\\
		\hline
		&3&$(6,6,0)$&22\\
		\hline
	\end{tabular}
\end{table}

\begin{table}[h]
	\centering
	\caption{$4$-colored $t$-$(n,(n_0,n_1,n_s,n_{s^2}),|\mathcal{B}|)$ design in Type IV code}
	\label{Tab:Type_IV_colored_designs}
	\begin{tabular}{c|c|c|c}
		\hline
		$n$&$t$&\shortstack{Blocks in the cwe $(n_0,n_1,n_s,n_{s^2})$\\ up to permutation}&Number of blocks \\
		\hline
		4&1&$(2,2,0,0)$&2\\
		\hline
		6&1&$(2,2,2,0)$&15\\
		\hline
		8&1&$(4,4,0,0)$&14\\
		\hline
		8&1&$(2,2,2,2)$&168\\
		\hline
		6&2&$(2,2,2,0)$&15\\
		\hline
		8&2&$(4,4,0,0)$&14\\
		\hline
		8&2&$(2,2,2,2)$&168\\
		\hline
		8&3&$(4,4,0,0)$&14\\
		\hline
		8&3&$(2,2,2,2)$&168\\
		\hline
	\end{tabular}
\end{table}

\section{Concluding remarks}\label{Sec:Conclusion}



Let $D_{w}$ be the support design of a code $C$ for weight $w$ and
\begin{align*}
	\delta(C)&:=\max\{t\in \mathbb{N}\mid \forall w,
	D_{w} \mbox{ is a } t\mbox{-design}\},\\
	s(C)&:=\max\{t\in \mathbb{N}\mid \exists w \mbox{ s.t.~}
	D_{w} \mbox{ is a } t\mbox{-design}\}.
\end{align*}
We note that $\delta(C) \leq s(C)$.
In our previous papers
\cite{BMN, extremal design H-M-N, MMN, extremal design2 M-N,MN-tec, dual support designs,{mn-typeI}},
we considered the possible occurrence of $\delta(C)<s(C)$.
This was motivated by Lehmer's conjecture, which is
an analogue of $\delta(C)<s(C)$ in the theory of lattices
and vertex operator algebras.
For the details,
see \cite{{BM1},{BM2},{BMY},{Lehmer},{Miezaki},{Miezaki2},{MMN},{Venkov},{Venkov2}}.

Let $CD_{w}$ be the support colored design of a code $C$ for weight $w$ and
\begin{align*}
	\delta_c(C)&:=\max\{t\in \mathbb{N}\mid \forall w,
	CD_{w} \mbox{ is a colored } t\mbox{-design}\},\\
	s_c(C)&:=\max\{t\in \mathbb{N}\mid \exists w \mbox{ s.t.~}
	CD_{w} \mbox{ is a colored } t\mbox{-design}\}.
\end{align*}
It is natural to give upper and lower bounds
of $\delta_c(C)$ and $s_c(C)$ for all extremal Type II, III, and IV codes.

We will continue the study of this paper in~\cite{CIMT20xx} to the case 
of $\ZZ_{k}$-codes as a generalization of the works done by
Bonnecaze et al.~\cite{BMS1999}.
Moreover, we investigate the colored designs to the case of
Kleinian codes in~\cite{CIMTxxxx}.

\section*{Declaration of competing interest}

The authors declare that they have no known competing financial interests or personal relationships that could have appeared to influence the work reported in this paper.

\section*{Acknowledgements}


The authors thank Tsuyoshi Miezaki and Manabu Oura for their helpful discussions 
and comments to this research.


\section*{Data availability statement}

The data that support the findings of this study are available from
the corresponding author.


\begin{thebibliography}{99}
	
	
	
	
	
	
	\bibitem{BM1}
	E.~Bannai, and T.~Miezaki,
	Toy models for D. H. Lehmer's conjecture.~
	{\sl J.~Math.~Soc.~Japan} {\bf 62} (2010), no.~3, 687--705.
	
	\bibitem{BM2}
	E.~Bannai, and T.~Miezaki,
	Toy models for D. H. Lehmer's conjecture II.~
	{\sl Quadratic and higher degree forms}, 1--27,
	Dev.~Math., {\bf 31}, Springer, New York, 2013.
	
	
	\bibitem{BMN}
	E.~Bannai, T.~Miezaki, and H.~Nakasora,
	A note on the Assmus--Mattson theorem for some binary codes II, submitted.
	
	\bibitem{BMY}
	E.~Bannai, T.~Miezaki, and V.A.~Yudin,
	An elementary approach to toy models for Lehmer's conjecture. (Russian)
	{\sl Izv.~Ross.~Akad.~Nauk Ser.~Mat.~}{\bf 75} (2011), no.~6, 3--16;
	{\sl translation in Izv.~Math.~}{\bf 75} (2011), no.~6, 1093--1106.
	
	\bibitem{BMS1972}
	E.R.~Berlekamp, F.J.~MacWilliams, and N.J.A.~Sloane, 
	Gleason's theorem on self-dual codes, 
	{\sl IEEE Trans.~Inform.~Theory} 
	{\bf IT-18} (1972), 409--414.
	
	
	\bibitem{BMS1999}
	A.~Bonnecaze, B.~Mourrain, and P.~Sol\'e,
	Jacobi polynomials, type II codes, and designs, 
	{\sl Des.~Codes Cryptogr.} {\bf 16} (1999), no.~3, 215--234.
	
	\bibitem{BRS2000}
	A.~Bonnecaze, E.~Rains, and P.~Sol\'e,
	$3$-colored $5$-designs and $\ZZ_{4}$-codes, 
	{\sl J. Stat. Plan. Inference} {\bf 86} (2000), 349--368. 
	
	\bibitem{BSU2001}
	A.~Bonnecaze, P.~Sol\'e, and P.~Udaya,
	Tricolore $3$-designs in Type~III codes,
	{\sl Discrete Math.} {\bf 241} (2001), 129--138.
	
	\bibitem{Magma}
	W.~Bosma, J.~Cannon, and C.~Playoust,
	The Magma algebra system.~I.\ The user language, 
	\emph{J. Symb. Comp.} 
	{\bf 24} (1997), 235--265.
	
	
	
	\bibitem{Cameron2009}  
	P.J.~Cameron, 
	{A generalisation of $t$-designs},
	{\sl Discrete Math.},
	{\bf 309}(2009), 4835--4842.
	
	
	
	
	\bibitem{CIMT20xx}
	H.S.~Chakraborty, R.~Ishikawa, T.~Miezaki, and Y.~Tanaka,
	Jacobi polynomials and design theory III,
	in preparation.
	
	\bibitem{CIMTxxxx}
	H.S.~Chakraborty, R.~Ishikawa, T.~Miezaki, and Y.~Tanaka,
	Jacobi polynomials and design theory IV,
	in preparation.
	
	\bibitem{CM2021}
	H.S.~Chakraborty, and T.~Miezaki,
	Variants of Jacobi polynomials in coding theory,
	{\sl Des. Codes Cryptogr.}
	{\bf 90} (2022), 2583--2597.
	
	\bibitem{CMO2022}
	H.S.~Chakraborty, T.~Miezaki, and M.~Oura,
	{Weight enumerators, intersection enumerators and Jacobi polynomials II},
	{\sl Discrete Math.} {\bf 345}(12) (2022), Paper No. 113098.
	
	
	\bibitem{CMOTxxxx}
	H.S.~Chakraborty, T.~Miezaki, M.~Oura, and Y.~Tanaka,
	{Jacobi polynomials and design theory I},
	{\sl Discrete Math.} {\bf 346}(6) (2023), Paper No. 113339.
	
	
	
	
	
	
	
	
	
	\bibitem{EZ} 
	M.~Eichler, and D.~Zagier, 
	{\sl The Theory of Jacobi Forms}, 
	Progress in Mathematics, vol.~55, 
	Birkhauser Boston, Inc., Boston, MA, 1985.
	
	
	
	
	\bibitem{Gleason} 
	A.M.~Gleason,
	Weight polynomials of self-dual codes and the MacWilliams identities, 
	in: Actes du Congr\`es International des Math\'ematiciens
	(Nice, 1970), Tome 3, Gauthier-Villars, Paris, 1971, pp.~211--215.
	
	\bibitem{HM}
	M.~Harada, and A.~Munemasa,
	Database of self-dual codes,
	https://www.math.is.tohoku.ac.jp/\textasciitilde munemasa/selfdualcodes.htm.
	
	\bibitem{HOO}
	K. Honma, T.~Okabe, and M.~Oura, 
	Weight enumerators, intersection enumerators, and Jacobi
	polynomials, 
	{\sl Discrete Math.} 
	{\bf 343} (2020), no.~6, 111815.
	
	\bibitem{extremal design H-M-N}
	N.~Horiguchi, T.~Miezaki, and H.~Nakasora,
	On the support designs of extremal binary doubly even self-dual codes,
	\emph{Des.~Codes Cryptogr.},
	{\bf 72} (2014), 529--537.
	
	\bibitem{HP}
	W.~C.~Huffman, and V.~Pless, 
	{\sl Fundamentals of Error-Correcting Codes}, 
	first edition, Cambridge University Press, United Kingdom, 2003.
	
	\bibitem{Lehmer}
	D.H.~Lehmer,
	{The vanishing of Ramanujan's $\tau (n)$},
	{\sl Duke Math.~J.}
	{\bf 14} (1947), 429--433.
	
	
	
	
	
	
	\bibitem{MMS1972}
	F.J.~MacWilliams, C.L.~Mallows, and N.J.A.~Sloane, 
	Generalizations of Gleason's theorem on weight enumerators of self-dual codes, 
	{\sl IEEE Trans.~Inform.~Theory} 
	{\bf IT-18} (1972) 794--805.
	
	
	\bibitem{MS1977}
	F.J.~MacWilliams, and N.J.A.~Sloane,
	{\sl The Theory of Error-Correcting Codes},
	first edition, Elsevier/North Holland, New York, 1977.
	
	\bibitem{Miezaki2021}
	T.~Miezaki, 
	{Design-theoretic analogies between codes, lattices, and vertex operator algebras}, 
	\emph{Des. Codes Cryptogr.}, 
	{\bf 89} (2021), 763--780.
	
	\bibitem{Miezaki}
	T.~Miezaki,
	{Conformal designs and D.H. Lehmer's conjecture},
	{\sl J.~Algebra} {\bf 374} (2013), 59--65.
	
	\bibitem{Miezaki2}
	T.~Miezaki,
	{Design-theoretic analogies between codes, lattices, and vertex operator algebras.},
	{\sl Des.~Codes Cryptogr.} {\bf 89} (2021), no.~5, 763--780.
	
	\bibitem{MMN2021}
	T.~Miezaki, A.~Munemasa, and H.~Nakasora,
	An note on Assmus--Mattson theorems, 
	\emph{Des. Codes Cryptogr.}, 
	{\bf 89} (2021), 843--858.
	
	\bibitem{MMN}
	T.~Miezaki, A.~Munemasa, and H.~Nakasora,
	A note on Assmus--Mattson type theorems,
	\emph{Des. Codes Cryptogr.},
	{\bf 89} (2021), 843--858.
	
	\bibitem{extremal design2 M-N}
	T.~Miezaki, and H.~Nakasora,
	An upper bound of the value of $t$ of the support $t$-designs of extremal binary doubly even
	self-dual codes,
	\emph{Des.~Codes Cryptogr.},
	{\bf 79} (2016), 37--46.
	
	\bibitem{MN-tec}
	T.~Miezaki, and H.~Nakasora,
	The support designs of the triply even binary codes of length $48$,
	\emph{J.~Combin.~Designs},
	{\bf 27} (2019), 673--681.
	
	\bibitem{dual support designs}
	T.~Miezaki, and H.~Nakasora,
	A note on the Assmus--Mattson theorem for some binary codes,
	\emph{Des. Codes Cryptogr.},
	{\bf 90} (2022), no.~6, 1485--1502.
	
	\bibitem{mn-typeI}
	T.~Miezaki, and H.~Nakasora,
	On the Assmus--Mattson type theorem
	for Type I and even formally self-dual codes, submitted.
	
	\bibitem{NRS}
	G.~Nebe, E.M.~Rains, and N.J.A.~Sloane, 
	{\sl Self-Dual Codes and Invariant Theory}, 
	Algorithms and Computation in Mathematics, vol.~{\bf 17}, Springer-Verlag,
	Berlin, 2006.
	
	
	
	
	
	
	
	\bibitem{Ozeki}
	M.~Ozeki, 
	On the notion of Jacobi polynomials for codes, 
	{\sl Math.~Proc.~Cambridge Philos.~Soc.}
	{\bf 121} (1) (1997) 15--30.
	
	
	
	
	
	\bibitem{Slone1977}
	N.J.A.~Sloane, 
	Error-correcting codes and invariant theory: New applications of a
	nineteenth-century technique, 
	{\sl Amer. Math. Monthly} {\bf 84} (1977), 82--107.
	
	\bibitem{Stanley1979}
	R.P.~Stanley,
	Invariants of finite groups and their applications to combinatorics,
	{\sl Bull. (New Series) Amer. Math. Soc.} {\bf 1}(3) (1979), 475--511.
	
	\bibitem{Venkov}
	{B.B.~Venkov, Even unimodular extremal lattices} (Russian),
	{\sl Algebraic geometry and its applications. Trudy Mat. Inst. Steklov.}
	{\bf 165} (1984), 43--48;
	translation in
	{\sl Proc.~Steklov Inst.~Math.}
	{\bf 165} (1985) 47--52.
	
	\bibitem{Venkov2}
	B.B.~Venkov, R\'eseaux et designs sph\'eriques, (French) [Lattices and spherical designs] {\sl R\'eseaux euclidiens, designs sph\'eriques et formes modulaires}, 10--86, Monogr.~Enseign.~Math., 37, Enseignement Math., Geneva, 2001.
	
	
	
	
	
	
	
\end{thebibliography}
\end{document}